\newcommand*\diff{\mathop{}\!\mathrm{d}}
\newtheorem{definition}{Definition}[section]
\newtheorem{theorem}[definition]{Theorem}
\newtheorem{example}[definition]{Example}
\newtheorem{corollary}[definition]{Corollary}
\newtheorem{remark}[definition]{Remark}
\newtheorem{proposition}[definition]{Proposition}
\newtheorem{lemma}[definition]{Lemma}
\numberwithin{equation}{section}
\DeclarePairedDelimiter\abs{\lvert}{\rvert}%
\DeclarePairedDelimiter\norm{\lVert}{\rVert}%
\let\oldabs\abs
\def\abs{\@ifstar{\oldabs}{\oldabs*}}
\let\oldnorm\norm
\def\norm{\@ifstar{\oldnorm}{\oldnorm*}}
\newcommand{\al} {\alpha}
\newcommand{\be} {\beta}
\newcommand{\De} {\Delta}
\newcommand{\om} {\omega}
\newcommand{\Om} {\Omega}
\newcommand{\Gr} {\nabla}
\newcommand{\no} {\nonumber}
\newcommand{\noi} {\noindent}
\newcommand{\ra} {\rightarrow}
\def\w{{\widetilde w}}
\def\dx{{\,\rm d}x}
\def\dt{{\rm d}t}
\def\dr{{\rm d}r}
\def\ds{{\rm d}s}
\def \tr {\textcolor{red}}
\def\sb2{{{\mathcal D}^{1,2}_0(B_1^c)}}
\def\w2r{{{ W}^{2,2}(\R^N)}}
\def\d2{{{\mathcal D}^{2,2}_0(\Om)}}
\def\Dr{{{\mathcal D}^{2,2}_0(\R^N)}}
\def\C{{\mathcal C}}
\def\D{{\mathcal D}}
\def\Me{{\mathcal M}}
\def\R{{\mathbb R}}
\def\N{{\mathbb N}}
\def\F{{\mathcal F}}
\def\({{\Big(}}
\def\){{\Big)}}
\def\ho{{H_0^1(\Om)}}
\def\ws2{{\F_{\frac{N}{2}}}}
\def\L2{{ L^{1,\;\infty}(\log L)^2}}
\def\l2{\mathcal M\log L}
\def\cc{{\C_c^\infty}}
\def\c1Loc{{\C_{loc}^1}}
\title{On the Generalized Hardy-Rellich Inequalities}
\author{T.V. Anoop \thanks{corresponding author}\,,
{Ujjal Das},
Abhishek Sarkar \footnote{The author was supported by the project LO1506 of the Czech Ministry of Education, Youth and Sports.}}
\date{}
\begin{document}
 \maketitle
 \begin{abstract}
 In this article, we look for  the weight functions (say $g$) that admits the following generalized Hardy-Rellich type inequality:
 \begin{equation*}
 \int_{\Omega} g(x) u^2 \dx \leq C \int_{\Omega} |\Delta u|^2 \dx, \ \forall u \in \mathcal{D}^{2,2}_0(\Omega),
 \end{equation*}
 for some constant $C>0$, where $\Omega$ is an open set in $\R^N$ with $N\ge 1$.  We find various  classes of such  weight functions, depending  on the dimension $N$ and the geometry of  $\Om.$ Firstly, we use  the Muckenhoupt condition for the one dimensional weighted Hardy inequalities and  a symmetrization  inequality to obtain  admissible weights in certain  Lorentz-Zygmund spaces. Secondly, using the fundamental theorem of integration  we obtain the weight functions  in certain weighted Lebesgue spaces. As a consequence of our results, we obtain simple proofs for the  embeddings of $\d2$ into certain Lorentz-Zygmund spaces proved  by Hansson and later by Brezis and Wainger. 
 \end{abstract}

\medskip
\noindent
{\bf Mathematics Subject Classification (2010):} 35A23, 46E30, 46E35.

\noindent
{\bf Keywords:}
Generalized Hardy-Rellich inequality, Muckenhoupt condition, Symmetrization, Lorentz spaces, Lorentz-Zygmund spaces, Exterior domains.
 \section{Introduction and Main Results}
In this article, we  discuss the  generalized Hardy-Rellich inequalities. More precisely, we look for the weight functions $g$ that satisfy the following inequality:
\begin{align}\label{GHR}
\int_{\Om} g(x) u^2 \dx \le C \int_\Om |\De u|^2 \dx, \quad \forall u\in \d2,
\end{align}
where $\Om$ is an open set in $\R^N$ with $N\ge 1,$ $g\in L^1_{loc}(\Om)$ and $\d2$ is the completion of $\cc(\Om)$ with respect to 
 $\norm{\De u}_{L^2(\Omega)}$.  Depending on the dimension $N$ and on the geometry of $\Om,$ we find various classes of weight function that satisfies  \eqref{GHR}.

  The restriction on the dimension is mainly  due to the fact that the Beppo-Levi space $\d2$ may not be a function space for a general unbounded open set $\Om$. For example, when $1 \leq N\le 4,$ Hormander-Lions in \cite{Hormander} showed that $\Dr$ contains objects that do not belong to even in the space of distributions. However, when $\Om$ is an exterior domain we will see that (Remark \ref{embeddings})  $\d2$ is a well defined function space for any dimension $N.$ On the other hand, if $N\ge 5$ or $\Om$ is bounded, then  $\d2$ is always a function space and it is embedded in to  certain Lebesgue spaces. Thus depending on $N$, we will be
  considering various types of $\Om$ that ensures  the Beppo-Levi space $\d2$  is a function space:
  \begin{enumerate}[(i)]
  \item for $N\ge 5$: $\Om$ is an open set (bounded or unbounded),
  \item for $2\le N\le4$: $\Om$ is a bounded open set or an exterior domain.
  \end{enumerate}  
  Having made the assumptions on $N$ and $\Om$, we next look for conditions on $g$ so that \eqref{GHR} holds.

 First, recall the following classical Hardy-Sobolev inequality:
\begin{align}\label{Hardy}
 \int_\Om \frac{| u(x)|^2}{|x|^2}\dx \le \left(\frac{2}{N-2}\right)^2 \int_\Om |\Gr u|^2 \dx,\  \forall\, u\in \ho,
\end{align}
where $\Om$ is an open set in $\R^N (N \geq 3)$ containing the origin. Many  proofs for \eqref{Hardy}  are available in the literature. For an excellent  review of this topic we refer to the book \cite{Kufner}. Hardy-Sobolev inequality has been extended and generalized in several directions and for different function spaces. The improved Hardy-Sobolev inequalities are the ones that concerns with replacing the Hardy potential $\frac{1}{|x|^2}$ with $\frac{1}{|x|^2}$ + lower order radial weights,  see \cite{Brezis_Vazquez,Adimurthy_Mythily,Filippas} and the references therein.
On the other hand, many authors are also interested in generalized Hardy- Sobolev inequalities, i.e., more general weight functions in \eqref{Hardy} in place of  $\frac{1}{|x|^2}$. For example,  weights in certain Lebesgue spaces \cite{Manes-Micheletti, Allegretto},  weak Lebesgue spaces \cite{Visciglia} ($N\ge 3$) and  Lorentz-Zygmund spaces \cite{anoop} ($N=2$ and $\Om$ is a bounded). In this article, we study  the second order generalization of the Hardy-Sobolev inequality, namely \eqref{GHR}.
%
For brevity, we  make the following definition: 
 \begin{definition} \rm A function $g$ that satisfies \eqref{GHR}  is called an admissible weight. 
 \end{definition}
 
 Notice that the admissibility of $g^+$  ensures the admissibility of $g$,  henceforth in this article, we will be considering nonnegative  admissible weight functions. A nonnegative admissible function   necessarily belongs to  $L^1_{loc}(\Om)$.  The following second order generalization (for $N\ge 5$) of the  classical Hardy-Sobolev inequality is due to Rellich  \cite{Rellich}: 
  \begin{equation}\label{Rellich}
 \int_\Om \frac{| u(x)|^2}{|x|^4}\dx \le \frac{16}{N^2(N-4)^2} \int_\Om |\De u|^2 \dx, \ \forall\, u\in \w2r.  
  \end{equation} 
  Thus $\frac{1}{|x|^4}$ is an admissible weight. The authors used the spherical harmonics  in \cite{Rellich} to obtain the inequality \eqref{Rellich}, (see Section 7, Chapter 2, page 90-101). 
 Thereafter, many improved Rellich inequalities are proved in the literature, for example see  \cite{AdimurthiJFA, SantraCCM, GhoussoubM1, Tarsi, TertikasZ}. For further readings on the improved Hardy-Sobolev (first order) and Hardy-Rellich inequalities we refer to the monograph \cite{GhoussoubM} and the references therein.  The lack of P\'{o}lya-Szeg\"{o}  type inequality for the second order derivatives is one of the main difficulties in proving the Hardy-Rellich inequality. In general, the Schwarz symmetrization of an $\w2r$ function  do not admit the second order weak derivatives, even if they do, the second order derivatives may not satisfy the P\'{o}lya-Szeg\"{o} type inequality, see \cite{Milman, Cianchi2} for more discussion on this. 
   
 
  The embeddings of $\d2$ provide admissible weights in the dual of a space associated with the target space in the embedding.  Moreover, a finer embedding (a smaller target space) gives a larger class of admissible weights. For example, for  $N\ge 5,$  the embedding of $\Dr$  into the Lebesgue space $L^{2^{**}}(\R^N)$ ($2^{**}=\frac{2N}{N-4}$) ensures that  $L^{\frac{N}{4}}(\R^N)$ functions are admissible as obtained in \cite{Stavrakakis}.  A finer embedding of $\d2$ into the Lorentz space $L^{2^{**},2}(\Om)\subset L^{2^{**}}(\Om) $ is also available, see  \cite{Milman}. The embedding of $\d2$ into a smaller space $L^{2^{**},2}(\Om)$ provides a bigger class of admissible functions, namely the Lorentz space  $L^{\frac{N}{4},\infty}(\Omega)$. In this article, we present a proof for the admissibility of functions in $L^{\frac{N}{4},\infty}(\Omega)$ without using the above embedding and then obtain the embedding as a simple consequence of the admissibility.  The following theorem is  one of our main results:
\begin{theorem} \label{higherdimesionthm}
 Let $\Om$ be an open set  in $\R^N$ with $N\ge 5$ and  $g$ be a nonnegative  function.  
 \begin{enumerate}[(i)]
  \item (A sufficient condition)  If $g \in L^{\frac{N}{4},\infty}(\Om),$ then $g$ is admissible.
  \item (A necessary condition) In addition, let $\Om$ be a ball centered at the origin or entire $\R^N $ and $g$ be radial, radially decreasing. Then $g$ is admissible,  only if  $g$ belongs to  $L^{\frac{N}{4},\infty}(\Om).$
 \end{enumerate}
\end{theorem}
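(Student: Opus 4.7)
For part (i), my plan is to reduce the inequality \eqref{GHR} to a one-dimensional weighted Hardy inequality and then invoke the Muckenhoupt criterion. Given $u \in \d2$, I would extend $u$ by zero to $\R^N$ and represent $u$ as the Newtonian potential of $f := -\De u \in L^2(\R^N)$, obtaining the pointwise bound
\[
|u(x)| \leq c_N \int_{\R^N} \frac{|f(y)|}{|x-y|^{N-2}} \, dy.
\]
The O'Neil rearrangement inequality for the Riesz potential of order $2$ (or, equivalently, Talenti's symmetrization for $-\De$) then yields
\[
u^*(t) \leq C\left( t^{2/N - 1} \int_0^t f^*(s)\, ds + \int_t^\infty s^{2/N - 1} f^*(s)\, ds \right), \qquad t > 0.
\]
Combining this with the Hardy--Littlewood inequality $\int_\Om g u^2 \, dx \leq \int_0^{|\Om|} g^*(t) (u^*(t))^2 \, dt$ reduces matters to a one-dimensional estimate in terms of $\int_0^\infty (f^*)^2 \, ds = \|\De u\|_2^2$.

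For this one-dimensional step, the hypothesis $g \in L^{N/4,\infty}(\Om)$ translates into $g^*(t) \leq C\, t^{-4/N}$. Squaring the O'Neil bound and integrating against $g^*$ yields two weighted Hardy-type integrals: the averaging operator $f^* \mapsto \int_0^t f^*$ against the weight $g^*(t)\, t^{2(2/N-1)}$, and its dual $f^* \mapsto \int_t^\infty s^{2/N-1} f^*(s)\, ds$ against the weight $g^*(t)$. Under the decay $g^*(t) \leq C t^{-4/N}$, a direct computation verifies the classical Muckenhoupt $A_2$-type conditions for both operators, and the weighted Hardy inequalities deliver $\int_\Om g u^2 \, dx \leq C \|\De u\|_2^2$.

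For part (ii), since $g$ is radial and radially decreasing on a ball centered at the origin (or on $\R^N$), the distribution function of $g$ is determined by its radial profile and $\|g\|_{L^{N/4,\infty}(\Om)}$ is comparable to $\sup_{r} r^4 g(r)$ (over the relevant range of $r$). It therefore suffices to prove $g(r) \leq C r^{-4}$. Fix a nontrivial $\phi \in \cc(B_2 \setminus \overline{B_1})$ and, for each permissible $R$, use the scaled test function $u_R(x) := \phi(x/R) \in \d2$, whose support lies in $B_{2R} \setminus \overline{B_R}$. A direct scaling computation gives $\|\De u_R\|_2^2 = R^{N-4} \|\De \phi\|_2^2$, while the radial monotonicity of $g$ produces $\int_\Om g u_R^2 \, dx \geq g(2R) R^N \|\phi\|_2^2$. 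Plugging these bounds into \eqref{GHR} yields $g(2R) \leq C R^{-4}$, and letting $R$ range over the admissible values completes the argument.

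The principal technical obstacle lies in step (i): although the O'Neil/Talenti rearrangement bound is classical, coupling it cleanly with the Muckenhoupt criterion requires verifying the $A_2$-type condition for both the Hardy operator and its dual appearing in the decomposition, and tracking constants through the two reductions (rearrangement and one-dimensional Hardy). The necessary condition in (ii), by contrast, is essentially a scaling argument and should be routine.
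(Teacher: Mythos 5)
Your proposal is correct, and for part (i) it is essentially the paper's own argument: the pointwise rearrangement bound you derive from the Newtonian-potential representation of $u$ together with O'Neil's inequality is precisely the inequality (1.14) of Cianchi that the paper quotes as Lemma \ref{Lem:Cianchi}, and your verification of the Muckenhoupt conditions for the Hardy operator with weight $g^*(t)t^{-2+4/N}$ and for its dual with weight $g^*(t)$ against $v(s)=s^{2-4/N}$ is exactly the content of Lemma \ref{Lem:higher} (the paper exploits the monotonicity of $g^*$ via $\int_t^{|\Om|}g^*(s)s^{-2+4/N}\ds\le g^*(t)\int_t^{|\Om|}s^{-2+4/N}\ds$ rather than the pointwise decay $g^*(t)\le C t^{-4/N}$, but the two are interchangeable here). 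For part (ii) you take a mildly different route: the paper uses the explicit cap functions $u_r(x)=(r-|x|)^2$ on $B_r$, whose lower bound $\int_\Om g\,u_r^2\dx\ge (r/2)^4\int_0^{\om_N(r/2)^N}g^*(s)\ds$ controls $t^{4/N}g^{**}(t)$ directly, whereas your scaled annular bumps $\phi(x/R)$ control the pointwise values $R^4g(2R)$ and hence the quasinorm $\sup_t t^{4/N}g^*(t)$. Since $N/4>1$ the quasinorm and the $g^{**}$-based norm are equivalent, so both arguments close; yours avoids computing $\De u_r$ and estimating the singular term $r/|x|$ it produces, at the cost of invoking that equivalence. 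The only detail to pin down in your version is the meaning of the pointwise value $g(2R)$ for a merely measurable decreasing radial profile (take a left limit or the essential infimum of $g$ over the supporting annulus), which is harmless.
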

   Our proofs mainly rely on the Muckenhoupt necessary and sufficient conditions (Theorem 1 and Theorem 2 of \cite{Muckenhoupt}) for the one dimensional weighted Hardy inequalities and a pointwise inequality for the symmetrization that  obtained in \cite{Cianchi} (see (1.14)) using the rearrangement inequality for the convolution due to O'Neil (see \cite{Oneil}). We refer to \cite{EKP}, for  similar inequalities for the higher order derivatives.  Further, Theorem \ref{higherdimesionthm}  provides  a simple  proof for the embedding of $\d2$ into the Lorentz space $L^{2^{**},2}(\Om)$ (see Corollary \ref{BLtoLor}). 
 
  The space $L^{\frac{N}{4},\infty}(\Omega)$ does not include all the admissible weights. In the next theorem, we exhibit another class of admissible weights. The analogous result for the first order Hardy- Sobolev inequalities is obtained in \cite{Edelson}(see Lemma 1.1). 
  \begin{theorem}\label{Thm:Integral}
 Let $\Om\subset \R^N$ with $N\ge 5$ and let $g$ be a nonnegative function on $\Om$. If there exists a function $w\in L^1((0,\infty),r^3)$ such that $g(x)\le w(|x|)$ for all $x\in \Om$,
then  $g$ is admissible.    
 \end{theorem}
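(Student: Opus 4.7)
The plan is to reduce the inequality to a one-dimensional radial estimate. Assuming first that $u \in \cc(\Om)$ (dense in $\d2$) and extending $u$ by zero to $\R^N$, the key representation comes from iterating the fundamental theorem of calculus along each ray from infinity. Since $u$ and $\pa_t u$ have compact support as functions of the radial variable, for every $\om \in S^{N-1}$ and every $r \ge 0$,
\begin{equation*}
u(r\om) \;=\; \int_r^{\infty}(t-r)\,\pa_t^2 u(t\om)\dt,
\end{equation*}
which may be verified by a single integration by parts, or by iterating $u(r\om) = -\int_r^\infty \pa_t u(t\om)\dt$ and using Fubini.

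Applying Cauchy--Schwarz with the splitting $(t-r) = [(t-r)\, t^{-(N-1)/2}] \cdot t^{(N-1)/2}$ yields
\begin{equation*}
|u(r\om)|^2 \;\le\; \left(\int_r^\infty (t-r)^2\, t^{-(N-1)}\dt\right) \int_r^\infty |\pa_t^2 u(t\om)|^2\, t^{N-1}\dt.
\end{equation*}
The first factor equals $c_N\, r^{4-N}$, obtained via the change of variable $t = r(1+s)$ and the Beta function $B(3,N-4)$; convergence at $t=\infty$ is precisely what forces $N\ge 5$. Multiplying by $w(r)\, r^{N-1}$, integrating over $r \in (0,\infty)$ and $\om\in S^{N-1}$, and switching order of integration by Tonelli (all integrands nonnegative), I expect to obtain
\begin{equation*}
\int_{\R^N} w(|x|)\,u^2 \dx \;\le\; c_N \left(\int_0^\infty w(r)\, r^3\dr\right) \int_{\R^N} |\pa_r^2 u|^2 \dx,
\end{equation*}
with $c_N = 2/[(N-2)(N-3)(N-4)]$. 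The exponent $r^3$ on the left exactly matches the hypothesis $w \in L^1((0,\infty),\, r^3)$, making the outer constant finite.

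To close the estimate, the radial Hessian is dominated by the full one: writing $\pa_r^2 u = \om_i \om_j\, \pa_i\pa_j u$ and applying Cauchy--Schwarz in $(i,j)$ gives $|\pa_r^2 u|^2 \le |D^2 u|^2$ pointwise, while the classical identity $\int_{\R^N} |D^2 u|^2 \dx = \int_{\R^N} |\De u|^2 \dx$ (two integrations by parts for test functions) yields the claimed control. A density argument then extends the inequality to $u \in \d2$: smooth approximants $u_n \to u$ in $\d2$ converge to $u$ in $L^{2^{**}}(\Om)$ by the Sobolev embedding, hence a subsequence converges pointwise a.e., so Fatou on the left and continuity of $\|\De u_n\|_2$ on the right give the inequality for $u$. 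The main technical point is not any deep estimate but the Beta-function calculation that pins down the threshold $N \ge 5$ and aligns the weight $r^3$ with the integrability hypothesis on $w$; the rest is bookkeeping and a standard limiting argument.
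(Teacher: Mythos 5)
Your proof is correct and follows essentially the same route as the paper: the representation $u(r\omega)=\int_r^\infty(t-r)\,\pa_t^2u(t\omega)\dt$, Cauchy--Schwarz against the weight $t^{N-1}$, the domination $|\pa_r^2u|^2\le|D^2u|^2$ together with $\int|D^2u|^2=\int|\De u|^2$, and a density argument. The only (harmless) difference is that you retain $(t-r)^2$ in the first Hölder factor and evaluate it via the Beta function, giving the slightly better constant $\tfrac{2}{(N-2)(N-3)(N-4)}$ where the paper bounds $(t-r)\le t$ and gets $\tfrac{1}{N-4}$.
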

 The proof of the above theorem is based on the fundamental theorem of integral calculus. Further, we give  examples of  admissible weights  to  show that the  classes of  admissible weights given by Theorem~\ref{higherdimesionthm} and Theorem \ref{Thm:Integral}  are not contained in one another. 
 
 As we mentioned  before, when $N=4,$ the space $\d2$ may not be a function space for a general unbounded open set $\Om.$ However, for a bounded open set $\Om$, the Beppo-Levi space $\d2$ coincides with the usual Sobolev space  $H^2_0(\Om).$  Further,   $$H^2_0(\Om) \hookrightarrow L^A(\Om),$$
 where $L^A(\Om)$ is the Orlicz space generated by the N-function $A(t)=e^{t^2}.$  Using this  embedding, one can show that  all the nonnegative functions  in the Orlicz space $L\log L(\Om)$ are admissible. In this case, we use a point wise inequality for the symmetrization and the Muckenhoupt conditions  for the  one dimensional weighted Hardy inequalities to obtain a bigger class of admissible weights.
 For a measurable function $g,$ we denote its decreasing rearrangement by $g^*$ and  $g^{**}(t)=\frac{1}{t}\int_0^t g^*(s) ds.$ Now we define 
 \begin{align*}
  \l2(\Omega):= \left\{ g {\;\rm measurable\; }: \sup_{0<t<|\Om|}t \,\log\left( \frac{|\Om|}{t} \right) g^{**}(t) < \infty \right\}.
 \end{align*}
$\l2(\Omega)$ is a rearrangement invariant Banach function space with the norm 
\[ \norm{g}_{\l2(\Omega)}= \sup_{0<t<|\Om|}t \,\log\left( \frac{|\Om|}{t} \right) g^{**}(t), \]
for more on Banach function space see \cite{BennettR}. 
Now we state our next result.
\begin{theorem} \label{dim4thm}
 Let $\Om$ be a bounded open set  in $\R^4$ and  let $g$ be a nonnegative  function. 
 \begin{enumerate}
  \item (A sufficient condition) If $g\in \l2(\Omega)$, then $g$ is admissible.
  \item (A necessary condition) In addition, let $\Om$ be a ball centered at the origin and let $g$ be radial, radially decreasing. Then $g$ is admissible, only if  $g$ belongs to  $\l2(\Omega)$.
 \end{enumerate}
      \end{theorem}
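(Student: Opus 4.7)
The plan is to run, at the critical exponent $N=p=2$, the same Hardy--Littlewood + pointwise symmetrization + Muckenhoupt chain used for Theorem~\ref{higherdimesionthm}: the endpoint $N=4$ turns the integral weight $s^{2/N-1}=s^{-1/2}$ into a logarithmic kernel, and this is exactly what produces the $\l2$ condition.

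For the sufficient direction, I first use Hardy--Littlewood to transfer $g$ onto its decreasing rearrangement,
\[
\int_{\Om} g\,u^2\dx \le \int_0^{|\Om|} g^*(t)\bigl(u^*(t)\bigr)^2\dt,
\]
and then invoke the pointwise symmetrization inequality from \cite{Cianchi} cited in the introduction, which at $N=4$ reads schematically $u^*(t)\le c\int_t^{|\Om|} s^{-1/2}(\De u)^*(s)\ds$. With $f(s):=s^{-1/2}(\De u)^*(s)$ (so that $\int_0^{|\Om|} s\,f(s)^2\ds=\norm{\De u}_{L^2(\Om)}^2$ by the $L^2$-isometry of rearrangement), admissibility of $g$ reduces to the weighted one-dimensional Hardy inequality
\[
\int_0^{|\Om|} g^*(t)\left(\int_t^{|\Om|} f(s)\ds\right)^{\!2}\dt \le C\int_0^{|\Om|} s\,f(s)^2\ds.
\]
Muckenhoupt's criterion (with $p=q=2$) characterizes this by
\[
\sup_{0<r<|\Om|}\left(\int_0^r g^*(t)\dt\right)^{\!1/2}\left(\int_r^{|\Om|} \frac{\ds}{s}\right)^{\!1/2}<\infty,
\]
and since $\int_0^r g^*(t)\dt=r\,g^{**}(r)$ and $\int_r^{|\Om|} s^{-1}\ds=\log(|\Om|/r)$, this is precisely the condition $\norm{g}_{\l2(\Om)}<\infty$.

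For the necessary direction, with $\Om=B_R$ and $g$ radial, radially decreasing, I probe \eqref{GHR} with smoothed truncations of the biharmonic fundamental solution $\log(R/|x|)$ (the critical object for $\De^2$ in $\R^4$, since $\De\log|x|=2/|x|^2$). For each $r\in(0,R)$ I construct a radial $u_r\in \d2=H_0^2(B_R)$ such that (i) $u_r^2\equiv\log(R/r)$ on $B_r$, (ii) $\norm{\De u_r}_{L^2}^2\le C_0$ uniformly in $r$, and (iii) $u_r=\pa_\nu u_r=0$ on $\pa B_R$. The building block is
\[
u_r(\rho)=\frac{\log(R/\rho)}{\sqrt{\log(R/r)}} \quad\text{on } r<\rho<\rho_0 R
\]
(for a fixed $\rho_0<1$), extended by $\sqrt{\log(R/r)}$ on $B_r$ and smoothly cut off between $\rho_0 R$ and $R$; a direct computation in $\R^4$ gives $\int_{r<|x|<\rho_0 R}|\De u_r|^2\dx = 8\pi^2\log(\rho_0 R/r)/\log(R/r)=O(1)$. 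Inserting $u_r$ into \eqref{GHR} then yields $\log(R/r)\int_{B_r} g\dx \le C\,C_0$, and the identities $\int_{B_r} g\dx=|B_r|\,g^{**}(|B_r|)$ together with $\log(R/r)=\tfrac14\log(|\Om|/|B_r|)$ deliver $t\,g^{**}(t)\log(|\Om|/t)\lesssim 1$ for all $t\in(0,|\Om|)$, i.e., $g\in\l2(\Om)$.

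The main technical obstacle is property (ii): the piecewise prescription above has derivative jumps at $\rho=r$ and $\rho=\rho_0 R$, producing Dirac masses in $\De u_r$, so one must replace the corners by short quadratic (or mollified) transitions and check that their Laplacians contribute only $O(1)$ to $\norm{\De u_r}_{L^2}^2$---this is the Adams-type construction familiar from sharpness proofs of exponential inequalities in dimension $4$. A minor point on the sufficient side is that Cianchi's pointwise estimate is customarily stated for $p<N/2$, so one may need its endpoint formulation at $p=N/2=2$, or a $W^{2,p}_0\to\d2$ approximation with $p\uparrow 2$.
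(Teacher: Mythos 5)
Your sufficiency argument has a genuine gap: you quote Cianchi's pointwise estimate (Lemma \ref{Lem:Cianchi}) as ``schematically'' $u^*(t)\le c\int_t^{|\Om|}s^{-1/2}|\De u|^*(s)\ds$ at $N=4$, but the actual inequality has \emph{two} terms, $u^*(s)\le c\big(s^{-1/2}\int_0^s|\De u|^*(t)\dt+\int_s^{\infty}|\De u|^*(t)t^{-1/2}\dt\big)$, and the first term is not dominated by the second (take $|\De u|^*=\chi_{(0,\eps)}$ and look at $s>\eps$: your schematic bound would give $u^*(s)\le 0$). So after squaring and integrating against $g^*$ you must also control $\int_0^{|\Om|}g^*(s)s^{-1}\big(\int_0^s|\De u|^*(t)\dt\big)^2\ds$, which requires the \emph{non-dual} Muckenhoupt condition \eqref{Muckconst1} with $u(s)=g^*(s)s^{-1}$, $v\equiv 1$; since $\int_t^{|\Om|}g^*(s)s^{-1}\ds\le g^*(t)\log(|\Om|/t)$, the constant $A_1$ is again bounded by $\norm{g}_{\l2}$, so the fix stays entirely inside your toolkit --- this is precisely the pair of inequalities \eqref{M.eq1}--\eqref{M.eq2} in Lemma \ref{Lem:higher}. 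The term you do treat is handled correctly (your $A_2$ computation matches \eqref{M.eq2}). Your closing worry about the range of validity of Cianchi's estimate is unnecessary: as stated in Lemma \ref{Lem:Cianchi} it holds pointwise for all $u\in\cc(\R^N)$, $N\ge 3$, with no restriction of the form $p<N/2$.

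Your necessity argument is correct and takes a genuinely different route from the paper. The paper tests \eqref{GHR} with $u_r=(\log(R/|x|))^2\exp\big(-2\log(R/|x|)/\log(R/r)\big)$, normalized so that $\int_\Om|\De u_r|^2\lesssim(\log(R/r))^3$ while $u_r^2\gtrsim(\log(R/r))^4$ on $B_r$; you instead use the Adams/Moser-type truncated logarithm normalized to $\norm{\De u_r}_2^2=O(1)$ with $u_r^2=\log(R/r)$ on $B_r$. Both yield $\log(R/r)\int_0^{\om_4r^4}g^*(s)\ds\le C$ and hence $g\in\l2(\Om)$. Your version is arguably more transparent, but the burden you defer --- mollifying the two corners so that $u_r\in H^2_0(B_R)$ and the transition layers contribute only $O(1)$ to $\norm{\De u_r}_2^2$ --- is a real obligation (the unsmoothed function has surface Dirac masses in $\De u_r$ and is not in $H^2$), whereas the paper's test function is globally $C^1$ with piecewise smooth second derivatives and needs no such repair. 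The corner estimate does work out (a $C^1$ quadratic splice over $[r,2r]$ has second derivative of size $r^{-2}(\log(R/r))^{-1/2}$, contributing $O(1/\log(R/r))$), so with that detail supplied your necessity proof is complete.
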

 As a simple consequence  of the above theorem we have Corollary \ref{CriticalEmbd},  which gives the   embedding of $H^2_0(\Om)$ into a Lorentz-Zygmund spaces (finer than the embedding to Orlicz spaces) obtained independently by Brezis and Wainger \cite{Brezis_Wainger} and Hansson \cite{Hansson}. 

  Next we consider the exterior domains and annular regions  in $\R^N$ with $2\le N \le 4$. In this case we have the following results:  
 \begin{theorem}\label{Thm:exterior}
 Let $\Om=B_R\setminus \bar{B_1}  \subset \R^N$ with $1\le R \le \infty.$ Let $g$ be a nonnegative function and $w$ be another function such that $g(x)\le w(|x|)$ for all $x\in \Om$. If  
 $$w\in  \left \{ \begin{array}{ll}
 L^1((1,\infty),r^{N+1}
 ), & N= 3,4; R=\infty \\
 L^1((1,\infty),r^3\log r),  & N=2; R=\infty\\
 L^1(1,R), & 2\le N\le 4; R<\infty.
 \end{array} \right. $$ then  $g$ is admissible.    
 \end{theorem}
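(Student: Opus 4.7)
The plan is to mimic the fundamental--theorem-of-calculus approach used for Theorem~\ref{Thm:Integral}, but now exploiting the vanishing of $u$ and its radial derivative on the inner boundary $\{|x|=1\}$ rather than decay at infinity. First take $u\in \cc(\Om)$; since $u$ has compact support in $\Om$, both $u$ and all its derivatives vanish in a neighborhood of $\partial\Om$, so in polar coordinates $u(r\omega)\big|_{r=1}=0$ and $\partial_r u(r\omega)\big|_{r=1}=0$ for every $\omega\in S^{N-1}$. Two successive applications of the fundamental theorem of calculus (followed by swapping the order of integration) then yield the representation
\begin{equation*}
u(r\omega)=\int_1^r (r-t)\, u_{rr}(t\omega)\, \dt,\qquad r\in[1,R),
\end{equation*}
where $u_{rr}=\partial_r^2 u$ denotes the second radial derivative.

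Applying the Cauchy--Schwarz inequality with weight $t^{N-1}$ gives
\begin{equation*}
|u(r\omega)|^2 \le A_N(r)\int_1^R u_{rr}(t\omega)^2\, t^{N-1}\, \dt,\qquad A_N(r):=\int_1^r (r-t)^2\, t^{-(N-1)}\, \dt.
\end{equation*}
Integrating over $\omega\in S^{N-1}$ and combining the pointwise inequality $u_{rr}^2\le |D^2 u|^2$ (which follows from $u_{rr}=\langle \omega\otimes \omega,\, D^2 u\rangle$) with the standard identity $\int_\Om |D^2 u|^2\, \dx=\int_\Om (\De u)^2\, \dx$ (obtained by two integrations by parts after extending $u$ by zero to $\R^N$) produces
\begin{equation*}
F(r):=\int_{S^{N-1}} u(r\omega)^2\, d\omega \;\le\; A_N(r)\,\norm{\De u}_{L^2(\Om)}^2.
\end{equation*}

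A direct computation of $A_N(r)$ matches the three weight hypotheses precisely: $A_N(r)\le C(N,R)$ uniformly on $[1,R)$ when $R<\infty$; $A_N(r)\sim c_N r^2$ as $r\to\infty$ when $N=3,4$ and $R=\infty$; and $A_2(r)\sim r^2\log r$ as $r\to\infty$ when $N=2$ and $R=\infty$. Consequently
\begin{equation*}
\int_\Om g u^2\, \dx \;\le\; \int_1^R w(r)\, r^{N-1}\, F(r)\, \dr \;\le\; \norm{\De u}_{L^2(\Om)}^2\int_1^R w(r)\, r^{N-1}\, A_N(r)\, \dr,
\end{equation*}
and each of the three integrability conditions on $w$ in the statement ($L^1(1,R)$, $L^1((1,\infty),r^{N+1})$, and $L^1((1,\infty),r^3\log r)$) is \emph{exactly} what is needed to render this last integral finite; a standard density argument for $\cc(\Om)\subset \d2$ combined with Fatou's lemma then promotes the inequality to all of $\d2$. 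The main obstacle is the careful asymptotic analysis of $A_N(r)$ in the three regimes (in particular producing the extra $\log r$ factor in dimension two); the remaining steps—the identity $\int|D^2 u|^2=\int(\De u)^2$ and the density argument—are routine.
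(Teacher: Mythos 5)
Your proposal is correct and follows essentially the same route as the paper: the double fundamental-theorem representation $u(r,\omega)=\int_1^r(r-t)\,\partial_t^2u(t,\omega)\,\dt$ from the inner boundary, Cauchy--Schwarz with the weight $t^{N-1}$, and the bound $\int_0^\infty\int_{\mathbb{S}^{N-1}}r^{N-1}|\partial_r^2u|^2\,\diff S_\omega\,\dr\le\int_\Om|\De u|^2\dx$ (your pointwise estimate $u_{rr}^2\le|D^2u|^2$ plus $\int|D^2u|^2=\int(\De u)^2$ is exactly the content of the paper's Lemma~\ref{Polarinequality}). The only cosmetic difference is that you track $A_N(r)=\int_1^r(r-t)^2t^{1-N}\dt$ exactly, whereas the paper first bounds $(r-t)^2\le r^2$; both yield the same three integrability conditions on $w$.
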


 This article is organized as follows. In Section~\ref{Prelim}, we  briefly discussions  the function spaces and other prerequisites which are essential for the development of this article. Section~\ref{MainThm} deals with the proof of  Theorem~\ref{higherdimesionthm}-- Theorem \ref{Thm:exterior}. 
 In appendix, we present some results on Lorentz-Zygmund spaces which we require in this article.

%
%
%
%

\section{Preliminaries}\label{Prelim}
In this section, we first  describe the symmetrization and some of its properties, then we briefly discuss about the rearrangement invariant function spaces which will appear in this article. In the end, we discuss the Muckenhoupt conditions for  the one dimensional weighted Hardy inequalities.  
\subsection{Symmetrization}
Let $\Omega\subset \R^N$ be a Lebesgue measurable set.  Let $\Me(\Om)$ be the set of all extended real valued Lebesgue measurable functions those are finite a.e. in $\Om.$  For $f\in \Me(\Om)$ and for $s>0$, we define
$E_f(s)=\{x: |f(x)|>s \}.$  Then the {\it distribution function} $\alpha_f$ of $f$ is defined as 
\begin{eqnarray*}
\alpha_f(s) &: =&
 \big\vert E_f(s) 
 \big\vert, \, \mbox{ for } s>0,
\end{eqnarray*}
where $|A|$ denotes the Lebesgue measure of a set $A\subset \R^N.$
 Now we define the {\it one dimensional decreasing rearrangement} $f^*$ of $f$ as below: 
 \begin{align*}
f^*(t):= \begin{cases*} \operatorname{ess}\ \sup f, \ \ t =0\\ \inf \{s>0 \, : \, \alpha_f(s) < t \}, \; t>0.   \end{cases*}                   
 \end{align*}
The map $f \mapsto f^*$ is not sub-additive. However, we obtain a sub-additive function from $f^*,$ namely the maximal function $f^{**}$ of $f^*$, defined by 
\begin{equation*}
f^{**}(t)=\frac{1}{t}\int_0^tf^*(\tau) d\tau, \quad t>0.
\end{equation*}
The sub-additivity of $f^{**}$ with respect to $f$ helps us to define norms in certain function spaces.

The { \it Schwarz symmetrization } of $f$ is defined by 
\begin{equation*}
  f^\star(x)=f^*(\omega_N|x|^N)  \label{relation}, \quad \forall\, x\in \Omega^\star,
\end{equation*}
where $\omega_N$ is the measure of the unit ball in $\R^N$ and $\Omega^\star$ is the open ball centered at the origin with same measure as $\Omega.$

Next we state an important inequality concerning the Schwarz symmetrization, see Theorem 3.2.10 of \cite{EdEv}.
\begin{proposition}[Hardy-Littlewood inequality] 
Let $\Omega \subset \R^N$ with $N\ge 1$ and $f$, $g$ be nonnegative measurable functions. Then
\begin{align} \label{HardyLittle}
 \int_{\Omega} f(x)g(x) \dx \leq \int_{\Omega^\star}f^\star(x)g^\star(x) \dx = \int_0^{|\Omega|}f^*(t) g^*(t)\dt.
\end{align}
  \end{proposition}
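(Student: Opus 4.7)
The plan is to establish both assertions by the layer-cake (distributional) representation of nonnegative measurable functions, which reduces the inequality to a comparison between Lebesgue measures of super-level sets.

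First I would write, for any nonnegative measurable $f$, the identity $f(x)=\int_0^{\infty}\chi_{E_f(s)}(x)\,ds$, and likewise for $g$. Multiplying and integrating over $\Omega$, Tonelli's theorem yields
\begin{equation*}
\int_\Omega f(x)g(x)\,dx=\int_0^\infty\!\!\int_0^\infty \bigl|E_f(s)\cap E_g(t)\bigr|\,ds\,dt.
\end{equation*}
The same identity applied to $f^\star,g^\star$ on $\Omega^\star$ gives the corresponding expression with $E_{f^\star}(s)\cap E_{g^\star}(t)$ in place of $E_f(s)\cap E_g(t)$. Thus the first inequality in \eqref{HardyLittle} will follow once I show, for every $s,t>0$,
\begin{equation*}
\bigl|E_f(s)\cap E_g(t)\bigr|\le \min\{\alpha_f(s),\alpha_g(t)\}=\bigl|E_{f^\star}(s)\cap E_{g^\star}(t)\bigr|.
\end{equation*}

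The left-hand bound is trivial because the intersection is contained in each of the two sets. For the right-hand equality I would use the defining property of the Schwarz symmetrization: $E_{f^\star}(s)$ and $E_{g^\star}(t)$ are open balls in $\Omega^\star$ centred at the origin, with Lebesgue measures equal to $\alpha_f(s)$ and $\alpha_g(t)$ respectively (by equimeasurability of $f$ and $f^\star$). Two concentric balls are nested, so their intersection is the smaller ball, whose measure is exactly $\min\{\alpha_f(s),\alpha_g(t)\}$. Combining these facts and integrating in $(s,t)$ delivers $\int_\Omega fg\,dx\le\int_{\Omega^\star}f^\star g^\star\,dx$.

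For the second equality, I would pass to polar coordinates in $\Omega^\star$: writing $x=r\omega$ with $r>0$, $\omega\in S^{N-1}$, and using that $f^\star(x)=f^*(\omega_N|x|^N)$ is radial, I obtain
\begin{equation*}
\int_{\Omega^\star}f^\star(x)g^\star(x)\,dx=N\omega_N\int_0^{R}f^*(\omega_N r^N)g^*(\omega_N r^N)\,r^{N-1}\,dr,
\end{equation*}
where $R$ is the radius of $\Omega^\star$. The substitution $\tau=\omega_N r^N$, $d\tau=N\omega_N r^{N-1}dr$, converts this to $\int_0^{|\Omega|}f^*(\tau)g^*(\tau)\,d\tau$, using $\omega_N R^N=|\Omega^\star|=|\Omega|$. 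The only mildly delicate step is justifying Tonelli's theorem on a possibly unbounded domain and the equimeasurability $|E_{f^\star}(s)|=\alpha_f(s)$; both are standard and require only that $f,g$ be nonnegative measurable, with no additional integrability assumption since the integrands are nonnegative and the inequality is understood in $[0,\infty]$.
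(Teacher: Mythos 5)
Your argument is correct and complete: the layer--cake reduction to $\lvert E_f(s)\cap E_g(t)\rvert\le\min\{\alpha_f(s),\alpha_g(t)\}=\lvert E_{f^\star}(s)\cap E_{g^\star}(t)\rvert$, followed by the polar-coordinate change of variables $\tau=\omega_N r^N$, is the standard proof of this inequality. The paper does not prove the proposition itself but simply cites Theorem 3.2.10 of \cite{EdEv}, which proceeds along essentially the same lines, so there is nothing to add.
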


\subsection{Lorentz spaces}
The Lorentz spaces are refinement of usual Lebesgue spaces introduced by Lorentz himself in\cite{Lorentz}. For more details on Lorentz spaces and related results, we refer to the books \cite{Adams,EdEv,Loukas} and the article \cite{Hunt}. 

Let $\Om$ be an open set in $\R^N.$ Given a function $f\in\mathcal{M}(\Omega)$ and $(p,q) \in [1,\infty)\times[1,\infty]$ 
we consider the following quantity:
\begin{align*} 
 |f|_{(p,q)} := \norm{t^{\frac{1}{p}-\frac{1}{q}} f^{*} (t)}_{{L^q((0,\infty))}}
=\left\{\begin{array}{ll}
         \left(\displaystyle\int_0^\infty \left[t^{\frac{1}{p}-\frac{1}{q}} {f^{*}(t)}\right]^q \dt \right)^{\frac{1}{q}};\; 1\leq q < \infty, \vspace{4mm}\\ 
         \displaystyle\sup_{t>0}t^{\frac{1}{p}}f^{*}(t);\; q=\infty.
        \end{array} 
\right.
\end{align*}
The Lorentz space $L^{p,q}(\Om)$ is defined as
\[ L^{p,q}(\Om) := \left \{ f\in \mathcal{M}(\Om): \,   |f|_{(p,q)}<\infty \right \}.\]
$ |f|_{(p,q)}$ is  a complete quasi norm on $L^{p,q}(\Om).$ 
For $(p,q) \in (1,\infty)\times[1,\infty]$, let  
 $$ \norm{f}_{(p,q)}:= \norm{t^{\frac{1}{p}-\frac{1}{q}} f^{**} (t)}_{{L^q((0,\infty))}}.$$
Then $\norm{f}_{(p,q)}$ is a norm on $L^{p,q}(\Om)$ and it is equivalent to the quasinorm $|f|_{(p,q)}$ (see Lemma 3.4.6 of \cite{EdEv}). 
 For the computational simplicity,  we use  $ |f|_{(p,q)}$ instead of  $\norm{f}_{(p,q)}$.
 Note that $L^{p,p}(\Omega)=L^p(\Omega)$  for $p\in (1,\infty)$ and  $ L^{p,\infty}(\Om)$ coincides with the weak-$L^p$ space (Marcinkiewicz space)
 $:=\left \{ f \in \mathcal{M}(\Om):\sup_{s>0}s(\alpha_f(s))^{\frac{1}{p}}<\infty \right \}.$
 
 \subsection{Lorentz-Zygmund Space}
Now we briefly sketch an overview of Lorentz-Zygmund spaces. For more details on Lorentz-Zygmund Spaces we refer \cite{Bennet, BennettR, Edmund}. For a bounded open set $\Omega \subset \R^N$, $1\le p,q\le \infty$, and $\al\in \R$ we define the following quasinorms:
$$|f|_{(p,q,\al)}=\norm{\bigg(\log\left(\frac{e|\Omega|}{t}\right)\bigg)^\al t^{\frac{1}{p}-\frac{1}{q}} f^{*}(t)}_{L^q((0,|\Om|))}.$$
Then the Lorentz-Zygmund space $L^{p,q}(\log L)^\al(\Om)$  is defined as  
$$L^{p,q}(\log L)^\al(\Om):=\left \{f \in \mathcal{M}(\Om): |f|_{(p,q,\al)}< \infty\right \}.$$ For $p,q$ and $\al$ as before, let
$$\norm{f}_{(p,q,\al)}=
          \norm{\bigg(\log\left(\frac{e|\Omega|}{t}\right)\bigg)^\al t^{\frac{1}{p}-\frac{1}{q}} f^{**}(t)}_{L^q((0,|\Om|))}.$$   
For  $p>1,$ $\norm{f}_{(p,q,\al)}$ is a norm on $L^{p,q}(\log L)^\al(\Om)$  and it is equivalent to the quasinorm  $|f|_{(p,q,\al)}$ ( see Corollary 8.2 of \cite{Bennet}). In appendix (Proposition \ref{equivnorm}), we provide a proof  for the equivalence in  the case $p=\infty, q=2$ and $\al=-1.$

Note that, $L^{p,q}(\log L)^0(\Om)$ coincides with the Lorentz space $L^{p,q}(\Om)$. In appendix, we show that as a vector space $\L2(\Omega)$ and $\l2(\Omega)$ are same (Proposition \ref{equiv}). However, the quasinorm $|f|_{(1,\infty,2)}$ and the norm $\norm{f}_{\l2}$ are not equivalent.

\subsection{Muckenhoupt Condition}
The following necessary and sufficient conditions (see Theorem 1 and Theorem 2 of \cite{Muckenhoupt}) for the one dimensional weighted Hardy inequalities play an important role in our results:

\begin{theorem}[Muckenhoupt condition]\label{Muckenhoupt_condition}
Let $u,v$ be nonnegative measurable functions such that $v>0$. Then for any $a\in (0,\infty],$
\begin{enumerate}[(i)] 
 \item  the inequality
 \begin{align}
 \int_0^a  \left  | \int_0^s f(t) \dt \right |^2 u(s) \ds \leq C \int_0^a |f(s)|^2 v(s) \ds,  \label{Muck1}
\end{align}
 holds for all measurable function $f$ on $(0,a)$ if and only if 
\begin{align} \label{Muckconst1}
A_1:=\sup_{0<t<a} \left(\int_t^a u(s) \ds \right)\left ( \int_0^t v(s)^{{-1}} \ds \right)< \infty.
\end{align}
\item the dual inequality

\begin{align}
 \int_0^a  \left  | \int_s^a f(t) \dt \right |^2 u(s) \ds \leq C \int_0^a |f(s)|^2 v(s) \ds,     \label{Muck2}
\end{align}
holds for all measurable function $f$ on $(0,a)$ if and only if 
\begin{align} \label{Muckconst2}
A_2:=\sup_{0<t<a} \left(\int_0^t u(s) \ds \right)\left ( \int_t^a v(s)^{{-1}} \ds \right)< \infty.
\end{align}

\end{enumerate}

\end{theorem}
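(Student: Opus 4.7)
The plan is to treat parts (i) and (ii) separately; once (i) is settled, part (ii) follows by a symmetric argument (or, when $a<\infty$, by the change of variables $t\mapsto a-t$). Within each part I will prove necessity by plugging in a tailored test function, and sufficiency by a weighted Cauchy--Schwarz trick combined with Fubini and an integration by parts.

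For necessity in (i), fix $t_0\in(0,a)$ and write $V(t_0):=\int_0^{t_0} v^{-1}\,\ds$. Plugging the test function $f=v^{-1}\chi_{(0,t_0)}$ into \eqref{Muck1} forces $\int_0^s f(t)\,\dt = V(t_0)$ for $s\ge t_0$, so the left-hand side dominates $V(t_0)^2 \int_{t_0}^a u(s)\,\ds$, while the right-hand side equals $C\, V(t_0)$. Dividing yields $\bigl(\int_{t_0}^a u\bigr)\bigl(\int_0^{t_0} v^{-1}\bigr)\le C$, and taking the supremum over $t_0$ gives $A_1\le C$. The degenerate case $V(t_0)=\infty$ is handled by monotone-convergence truncation (replace $v^{-1}$ by $\min(v^{-1},n)$ and send $n\to\infty$).

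For sufficiency in (i), assume $A_1<\infty$ and set $V(t):=\int_0^t v^{-1}$. Apply Cauchy--Schwarz with the auxiliary weight $w(t):=v(t)\,V(t)^{1/2}$:
\begin{equation*}
\Bigl(\int_0^s f(t)\,\dt\Bigr)^2 \le \Bigl(\int_0^s f(t)^2\, w(t)\,\dt\Bigr)\Bigl(\int_0^s w(t)^{-1}\,\dt\Bigr).
\end{equation*}
Since $w^{-1}=V^{-1/2}V'$, the second factor collapses to $2V(s)^{1/2}$. Substituting back into the left-hand side of \eqref{Muck1} and swapping the order of integration by Fubini gives
\begin{equation*}
\int_0^a u(s)\Bigl(\int_0^s f(t)\,\dt\Bigr)^2\!\ds \le 2\int_0^a f(t)^2 v(t) V(t)^{1/2}\Bigl(\int_t^a u(s) V(s)^{1/2}\,\ds\Bigr)\dt.
\end{equation*}
Writing $U(s):=\int_s^a u$, the Muckenhoupt hypothesis reads $U(s)V(s)\le A_1$. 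Integrating by parts in $\int_t^a u\,V^{1/2} = -\int_t^a U' V^{1/2}$ yields $U(t)V(t)^{1/2} + \tfrac12 \int_t^a U\, V^{-1/2}V'$, and plugging in $U\le A_1/V$ together with $\int_t^a V^{-3/2}V'\,\ds = 2(V(t)^{-1/2}-V(a)^{-1/2})$ gives $V(t)^{1/2}\int_t^a u\,V^{1/2} \le 2A_1$. This delivers \eqref{Muck1} with $C\le 4A_1$.

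Part (ii) then follows by rerunning the same scheme with the test function $v^{-1}\chi_{(t_0,a)}$ and the replacement $V(t):=\int_t^a v^{-1}$; the Cauchy--Schwarz/Fubini/integration-by-parts machinery is formally identical after reversing the direction of the inner integration. The main obstacles I anticipate are the careful handling of boundary cases in which $V$ or $U$ is infinite at an endpoint (resolved by truncation and monotone convergence), and the delicate selection of the auxiliary weight exponent $V^{1/2}$ above, which is precisely what is needed for the integration-by-parts step to produce an $a$-independent constant of the form $4A_j$.
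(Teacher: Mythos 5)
Your argument is essentially correct, but note that the paper does not prove this statement at all: Theorem \ref{Muckenhoupt_condition} is imported verbatim from Muckenhoupt's paper (Theorems 1 and 2 of \cite{Muckenhoupt}) and used as a black box, so there is no in-paper proof to compare against. What you have reconstructed is, in substance, the classical Muckenhoupt/Tomaselli argument for the case $p=q=2$: necessity via the test function $v^{-1}\chi_{(0,t_0)}$ (which makes both sides computable in terms of $V(t_0)=\int_0^{t_0}v^{-1}$ and yields $A_1\le C$ after the truncation $\min(v^{-1},n)$ when $V(t_0)=\infty$), and sufficiency via Cauchy--Schwarz against the auxiliary weight $vV^{1/2}$, Fubini, and the integration by parts $\int_t^a uV^{1/2}\le U(t)V(t)^{1/2}+\tfrac12\int_t^a UV^{-1/2}V'\le 2A_1V(t)^{-1/2}$ using $UV\le A_1$. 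The exponent $1/2$ on $V$ is indeed the one that closes the estimate, and the resulting bound $C\le 4A_1$ is the correct one for the squared formulation \eqref{Muck1}--\eqref{Muckconst1} (it corresponds to the factor $p^{1/p}(p')^{1/p'}=2$ at the level of norms; the paper's Remark quoting $C_b^i\le 2A_i$ from \cite{Kufner} appears to conflate the norm-level and squared-level constants). Two small points of care that you correctly flag but should make explicit in a full write-up: (a) when $V(s_0)=\infty$ for some $s_0<a$, the condition $A_1<\infty$ forces $u=0$ a.e.\ on $(s_0,a)$, so the sufficiency argument should be run on $(0,s^*)$ with $s^*=\inf\{s:V(s)=\infty\}$; and (b) in the integration by parts one drops the nonpositive boundary term $-U(a^-)V(a^-)^{1/2}$ rather than asserting it vanishes, which matters when $a=\infty$. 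Part (ii) by the symmetric substitution is fine.
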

 
\begin{remark}\rm
	Let $C^1_b$ and $C_b^2$ denote the best constants in \eqref{Muck1} and \eqref{Muck2} respectively. Then we have the following inequality (see \cite{Kufner})
	\begin{equation}
	{A_i} \leq C^i_b \leq 2{A_i}, \text{ for } i=1,2,    \label{Muckbest}
	\end{equation} where $A_i$'s are defined in \eqref{Muckconst1} and \eqref{Muckconst2}. 
	\end{remark}

  \section{Proof of main theorems} \label{MainThm}
In this section we prove our main theorems. First, we state an inequality (1.14 of \cite{Cianchi}) that plays the role of  P\'{o}lya-Szeg\"{o} inequality for the second order derivatives. This inequality is obtained using the rearrangement inequality for the convolution due to O'Neil \cite{Oneil}. 
 \begin{lemma}\label{Lem:Cianchi}
 For $u\in \cc(\R^N)$ with $N\ge 3$, let $u^*$ be the decreasing rearrangement of $u.$ Then the following inequality holds:
\begin{eqnarray} \label{Cianchiineq}
 u^*(s) \leq \frac{1}{2(N-2)\om_N^{{\frac{2}{N}}}} \left (s^{-1+\frac{2}{N}} \int_0^s |\Delta u|^*(t)\dt + \int_s^{\infty} |\Delta u|^*(t)t^{-1+\frac{2}{N}}\dt \right),\,\forall \,s>0.
\end{eqnarray}
\end{lemma}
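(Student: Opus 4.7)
The plan is to recover $u$ from $\Delta u$ via convolution with the Newtonian potential and then invoke O'Neil's rearrangement inequality for convolutions. Concretely, letting $\Phi(x)=\frac{1}{(N-2)N\omega_N}|x|^{2-N}$ denote the fundamental solution of $-\Delta$ on $\R^N$ (well defined for $N\ge 3$), any $u\in \cc(\R^N)$ admits the representation $u(x)=-\int_{\R^N}\Phi(x-y)\Delta u(y)\,\mathrm{d}y$. This immediately gives the pointwise estimate $|u(x)|\le (\Phi\ast|\Delta u|)(x)$, and since rearrangements respect pointwise comparison, $u^*(s)\le (\Phi\ast|\Delta u|)^*(s)\le (\Phi\ast|\Delta u|)^{**}(s)$.

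Next, I would compute the rearrangements of the kernel. Because $\Phi$ is radial and strictly decreasing in $|x|$, one can evaluate its distribution function explicitly and invert it to get
$$\Phi^*(t) = \frac{1}{(N-2)N\,\omega_N^{2/N}}\,t^{-1+\frac{2}{N}}, \qquad \Phi^{**}(s) = \frac{1}{2(N-2)\,\omega_N^{2/N}}\,s^{-1+\frac{2}{N}}.$$
The key analytic input is now O'Neil's inequality \cite{Oneil}: for nonnegative measurable $f,g$,
$$(f\ast g)^{**}(s)\le s\,f^{**}(s)\,g^{**}(s)+\int_s^\infty f^*(t)\,g^*(t)\,\dt.$$
Applying this with $f=\Phi$ and $g=|\Delta u|$, and using $s\,|\Delta u|^{**}(s)=\int_0^s|\Delta u|^*(t)\,\dt$, produces
$$u^*(s)\le \frac{s^{-1+\frac{2}{N}}}{2(N-2)\,\omega_N^{2/N}}\int_0^s |\Delta u|^*(t)\,\dt+\frac{1}{(N-2)N\,\omega_N^{2/N}}\int_s^\infty |\Delta u|^*(t)\,t^{-1+\frac{2}{N}}\,\dt.$$

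To conclude, I would observe that $\frac{1}{(N-2)N}\le \frac{1}{2(N-2)}$ for $N\ge 2$, so enlarging the coefficient in front of the second integral to $\frac{1}{2(N-2)\,\omega_N^{2/N}}$ yields precisely \eqref{Cianchiineq}. The main obstacle is not conceptual but bookkeeping: one must carefully track constants when passing from $\Phi$ to $\Phi^*$ and $\Phi^{**}$, and verify that O'Neil's output lines up with the target form; otherwise the argument is structurally straightforward once one commits to reducing a second-order estimate for $u$ to an essentially one-dimensional convolution estimate via the Newtonian representation.
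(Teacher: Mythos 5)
Your proof is correct, and it is exactly the route the paper indicates: the paper does not prove Lemma \ref{Lem:Cianchi} itself but cites it as inequality (1.14) of \cite{Cianchi}, obtained from O'Neil's rearrangement inequality for convolutions applied to the Newtonian-potential representation $u=-\Phi\ast\Delta u$. Your computations of $\Phi^*$, $\Phi^{**}$ and the final enlargement of the constant $\frac{1}{N(N-2)\om_N^{2/N}}$ to $\frac{1}{2(N-2)\om_N^{2/N}}$ all check out.
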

The next lemma is a consequence of  the Muckenhoupt condition:
 \begin{lemma}\label{Lem:higher}
	For $N\ge 4,$ let $\Om$ be an open set in $\R^N$. In addition, let $\Om$ be  bounded  when $N=4.$  Then for $$g \in X:= \left\{\begin{array}{ll}
	                   L^{\frac{N}{4},\infty}(\Omega), N\ge 5,\\
	                  \l2(\Omega), N=4.
	                  \end{array}\right.,$$ there exists a constant $C=C(N)>0$ such that  the following two inequalities hold: 
	\begin{equation}
\int_0^{|\Om|} g^*(s)s^{-2+\frac{4}{N}} \left(\int_0^s f(t)\dt\right)^2 \ds \leq C \norm{g}_X  \int_0^{|\Om|} f(s)^2 \ds ,  \label{M.eq1}
\end{equation}
\begin{equation}
\int_0^{|\Om|} g^*(s)  \left( \int_s^{|\Om|}f(t) t^{-1+\frac{2}{N}} \dt\right)^2 \ds \leq C \norm{g}_X \int_0^{|\Om|} f(s)^2 \ds, \label{M.eq2}
\end{equation}
 for  any measurable function $f$ on $(0,|\Om|)$.
\end{lemma}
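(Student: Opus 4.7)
The plan is to recognize both inequalities as one-dimensional weighted Hardy inequalities in the form covered by Theorem~\ref{Muckenhoupt_condition}, and to verify that the Muckenhoupt constants are bounded by $C(N)\norm{g}_X$. For \eqref{M.eq1}, I would take $u(s)=g^{*}(s)s^{-2+4/N}$ and $v\equiv 1$ in part~(i). For \eqref{M.eq2}, I would first substitute $h(t)=f(t)t^{-1+2/N}$, rewriting the right-hand side as $\int_0^{|\Om|} h(s)^{2} s^{2-4/N}\ds$, which puts the inequality in the form of part~(ii) with $u(s)=g^{*}(s)$ and $v(s)=s^{2-4/N}$. The task then reduces to estimating
$$A_1(t)=t\int_t^{|\Om|}g^{*}(s)s^{-2+4/N}\ds,\qquad A_2(t)=\Bigl(\int_0^{t}g^{*}(s)\ds\Bigr)\int_t^{|\Om|}s^{-2+4/N}\ds.$$

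The single technical observation I would use is that the maximal function $g^{**}$ is nonincreasing, so $g^{*}(s)\leq g^{**}(s)\leq g^{**}(t)$ whenever $s\geq t$, and $\int_0^{t}g^{*}(s)\ds = t\,g^{**}(t)$. Pulling $g^{**}(t)$ out, both quantities reduce to
$$A_i(t)\leq t\,g^{**}(t)\int_t^{|\Om|}s^{-2+4/N}\ds,\qquad i=1,2.$$
For $N\geq 5$ the exponent $-2+4/N$ lies in $(-2,-1)$, so the elementary integral is bounded by $\frac{N}{N-4}\,t^{-1+4/N}$; this gives $A_i(t)\leq \frac{N}{N-4}\,t^{4/N}g^{**}(t)\leq C(N)\,\norm{g}_{L^{N/4,\infty}}$. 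For $N=4$ the exponent is $-1$ and the integral equals $\log(|\Om|/t)$, so $A_i(t)\leq t\,\log(|\Om|/t)\,g^{**}(t)\leq \norm{g}_{\l2}$ directly from the definition of the $\l2$-norm. Either way, $A_i\leq C(N)\,\norm{g}_X$.

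Having $A_i\leq C(N)\,\norm{g}_X$ in hand, Theorem~\ref{Muckenhoupt_condition} together with the estimate $C_b^{i}\leq 2A_i$ from \eqref{Muckbest} yields the desired inequalities \eqref{M.eq1} and \eqref{M.eq2} with constant at most $2C(N)\norm{g}_X$. I do not expect any genuine obstacle in this argument; the proof is essentially bookkeeping once one spots the substitution in \eqref{M.eq2} and exploits the monotonicity of $g^{**}$ to decouple the $g$-dependence from an elementary power/log integral. The two regimes $N\geq 5$ and $N=4$ are treated by a uniform calculation, and the fact that $\int_t^{|\Om|}s^{-2+4/N}\ds$ switches from power-law to logarithm at $N=4$ is exactly the reason the relevant norm on $g$ changes from $L^{N/4,\infty}$ to $\l2$.
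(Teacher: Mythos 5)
Your proposal is correct and follows essentially the same route as the paper: both inequalities are reduced to the Muckenhoupt conditions with the identical choices $u(s)=g^{*}(s)s^{-2+4/N}$, $v\equiv 1$ for \eqref{M.eq1} and $u=g^{*}$, $v(s)=s^{2-4/N}$ for \eqref{M.eq2}, and the constants $A_1,A_2$ are bounded by $C(N)\norm{g}_X$ via the monotonicity of the rearrangement and the elementary power/log integral. Your only (harmless) deviations are making the substitution $h(t)=f(t)t^{-1+2/N}$ explicit and using $g^{*}(s)\le g^{**}(t)$ where the paper uses $g^{*}(s)\le g^{*}(t)$.
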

\begin{proof}
	For proving \eqref{M.eq1},  we set $a=|\Om|, u(s)=g^*(s) s^{-2+\frac{4}{N}}$ and $v(s)=1$ in \eqref{Muck1}.  Thus  $ \int_0^t v(s)^{{-1}} \ds= \int_0^t \ds =t$ and 
 $$
\int_t^a u(s) \ds  = \int_t^{|\Om|} g^*(s) s^{-2+\frac{4}{N}} \le   g^*(t) \int_t^{|\Om|}  s^{-2+\frac{4}{N}}\ds
 = \left\{\begin{array}{ll} 
                                                                                                                          \frac{N}{ N-4} t^{\frac{4}{N}-1}g^{*}(t), N\ge 5, \\ 
                                                                                                                          \log(\frac{|\Om|}{t})g^{*}(t), N=4.
                                                                                                                          \end{array}\right.$$
 Therefore,  $$A_1= \sup_{0<t<a} \left(\int_t^a u(s) \ds \right)\left ( \int_0^t v(s)^{{-1}} \ds \right)\le C \norm{g}_X<\infty$$ and hence 
 \eqref{M.eq1} follows from part (i) of Theorem \ref{Muckenhoupt_condition}.\\
    \noi  To prove \eqref{M.eq2} we set $a=|\Om|,u(s)=g^*(s) $ and $v(s)=s^{2-\frac{4}{N}}$ in \eqref{Muck2}. Now
    $ \int_0^t u(s) \ds=\int_0^t g^*(s)\ds= tg^{**}(t)$ and 
         \begin{equation*}\label{eqn3}
  \int_t^a v(s)^{-1} \ds = \int_t^{|\Om|} s^{-2+\frac{4}{N}}\ds \le \left\{\begin{array}{ll} 
                                                                                                                          \frac{N}{ N-4} t^{\frac{4}{N}-1}, N\ge 5, \\ 
                                                                                                                          \log(\frac{|\Om|}{t}), N=4.
                                                                                                                          \end{array}\right.
  \end{equation*}
  Therefore,
     $$A_2=\sup_{0<t<a} \left(\int_0^t u(s) \ds \right)\left ( \int_t^a v(s)^{{-1}} \ds \right)\le C \norm{g}_X<\infty.$$
 Hence 
 \eqref{M.eq2} follows from part (ii) of Theorem \ref{Muckenhoupt_condition}.
	\end{proof}

\subsection{The higher dimension, \texorpdfstring{$N\geq 5$}{Lg}}

 In this subsection, we give  proofs of Theorem \ref{higherdimesionthm} and Theorem \ref{Thm:Integral}. 
\begin{proof}[\bf Proof of Theorem~\ref{higherdimesionthm}]
(i) {\bf A sufficient condition}. Let $u\in  \cc(\Om)$. Then by the Hardy-Littlewood inequality \eqref{HardyLittle} we have 
\begin{equation}\label{Hardy-Littlewood}
\int_\Om g(x) u(x)^2\dx \le \int_0^{|\Omega|} g^*(s) u^*(s)^2\ds.
\end{equation}  
Further \eqref{Lem:Cianchi} gives
\begin{align}\label{reardec}
 \int_0^{|\Omega|} g^*(s) u^*(s)^2 \ds\leq & \ 2\int_0^{|\Omega|} g^*(s)s^{-2 +\frac{4}{N}} \left(\int_0^s|\Delta u|^*(t)\dt \right)^2\ds   \notag \\ 
       \quad &+ 2\int_0^{|\Omega|} g^*(s) \left(\int_s^{\infty}|\Delta u|^*(t)t^{-1+\frac{2}{N}}\dt \right)^2\ds.  
 \end{align}
 Since $g\in  L^{\frac{N}{4},\infty}(\Omega)$, using  Lemma~\ref{Lem:higher} we can bound the  right hand side of the inequality  by $C \norm{g}_{(\frac{N}{4},\infty)} \int_0^{|\Om|}\left(|\Delta u|^*(t)\right)^2 \dt.$ As
 $\norm{|\Delta u|^*}_{L^2((0,|\Om|))}=\norm{\Delta u}_{L^2(\Om)}$,  \eqref{Hardy-Littlewood} and \eqref{reardec} yields
\begin{align}\label{sufficiency:eqn1}
\int_\Om g(x) u(x)^2\dx \leq & C \norm{g}_{(\frac{N}{4},\infty)} \int_\Om |\De u|^2 \dx, \; \forall u\in  \cc(\Om).
 \end{align}
Thus by  density of  $\cc(\Om)$ in $\d2$, the above inequality holds for all $u$ in $\d2$ and hence  $g$ is admissible.

\noi(ii) {\bf A necessary condition}.  Let $R\in (0,\infty]$ and let $\Om=B(0;R)\subset \R^N$ with $N\ge 5$. Let $ g:\Om \ra [0,\infty)$ be a radial and radially decreasing  admissible function. 
We will show that $g \in L^{\frac{N}{4},\infty}(\Omega)$. For each $r\in (0,R),$ consider the following function:
 \begin{align*}
           u_r(x)= \left\{
          \begin{array}{ll}
           (r-|x|)^2 & ; |x|\leq r, \\
                                                          0 &; \text{ otherwise.}
                                                          \end{array}\right .
 \end{align*}
By differentiating twice, we get
  \begin{align*}
           \De u_r(x)= \left\{
          \begin{array}{ll}
           2N - (2N- 2)\frac{r}{|x|} &; \ |x|< r, \\
                                                         0 &;\text{ otherwise.}
                                                          \end{array}\right .
 \end{align*}
 Now
    \begin{align}
    	\int_{\Om} |\De u_r|^2 \dx &= \int_{B_r}|\De u_r|^2 \dx = \int_{B_r}\left[2N - (2N- 2)\frac{r}{|x|}\right]^2 \dx\notag \\&\leq 2 \bigg[4N^2 \om_N r^N +(2N-2)^2r^2 \int_{B_r} {\frac{1}{|x|^2}}\dx\bigg] \notag\\&\leq C_1\bigg[r^N + r^2 \int_0^r s^{N-3}\ds\bigg] \leq C_2r^N, \label{necessary2}
    \end{align}  
 where $C_1, C_2$ are constants that depends only on $N$. Thus for each $r\in (0,R),$ $u_r\in \d2.$ Furthermore, by the admissibility of $g$, we have 
 \begin{equation}\label{necessary3}
  \int_\Om g(x)u_r^2 \dx \leq C \int_\Om |\De u_r|^2 \dx,\; \forall r\in(0,R).
 \end{equation}
 Since $g$ is  radial and radially decreasing, the left hand side of the above inequality  can be estimated as below:        
   \begin{align}
   	\int_{\Om} g(x) u_r^2 \dx &\geq \int_{B_{{\frac{r}{2}}}}g(|x|) u_r^2 \dx \geq  \left(r-{\frac{r}{2}}\right)^4\int_{B_{{\frac{r}{2}}}}g(|x|)\dx  \notag \\&= \left({\frac{r}{2}}\right)^4\int_{B_{{\frac{r}{2}}}}g^{\star}(x)\dx =\left({\frac{r}{2}}\right)^4 \int_0^{\om_N ({\frac{r}{2}})^N} g^* (s)\ds.   \label{necessary1}
  	\end{align}    
  From \eqref{necessary2}, \eqref{necessary3} and \eqref{necessary1}, we obtain
    $$\left({\frac{r}{2}}\right)^4 \int_0^{\om_N ({\frac{r}{2}})^N} g^*(s)\ds \leq C C_2 r^N.$$ 
  Now by setting $\om_N ({\frac{r}{2}})^N=t $ and since $0<r<R$ is arbitrary, we conclude that $$ \sup_{t \in (0,\frac{|\Om|}{2^N})} t^{{\frac{4}{N}}} g^{**}(t) \leq C_3.$$ As $ t^{{\frac{4}{N}}} g^{**}(t) $  is bounded on $ (\frac{|\Om|}{2^N}, |\Om|) $,
  $g$ must belong to $L^{\frac{N}{4},\infty}(\Omega)$.
          \end{proof}
      
\begin{remark}\label{best}{\rm 
	Let $C_R$ be the best constant in \eqref{GHR}. Then  from \eqref{Cianchiineq}, \eqref{Muckbest} and Lemma~\ref{Lem:higher} one can deduce that
\begin{equation*}
 C_R \leq \frac{N }{(N-4)(N-2)^2 \om_N^{\frac{4}{N}}}\norm{g}_{(\frac{N}{4},\infty)}.
\end{equation*}
}
\end{remark}
\begin{example}\label{Hardy-Rellich}
{\rm  For  $\al\in (0,N)$ and  $R\in (0,\infty]$ let  $g(x)= \displaystyle\frac{1}{|x|^\al}, x\in B(0;R).$ It is easy to calculate
$$g^{*}(t)= \left\{\begin{array}{ll}
\left(\frac{\om_N}{t}\right)^{\frac{\al}{N}} & 0<t<\om_N R^N,\\
                    0 & t\ge \om_N R^N                   .
\end{array} \right. \quad  \quad g^{**}(t)= \left\{\begin{array}{ll}
                \frac{N}{N-\al}\left(\frac{\om_N}{t}\right)^{\frac{\al}{N}}& 0<t<\om_N R^N,\\
                 0 & t\ge \om_N R^N \end{array} \right.$$
Therefore,  $$ g\in L^{\frac{N}{4},\infty}(B(0;R)) \text{ with } \left\{\begin{array}{ll}
 R<\infty  \text{ if and only if}  & \al\le 4 \\
R=\infty  \text{ if and only if}  & \al=4.
\end{array}\right.$$ }
\end{example}
\begin{remark}
{\rm 
From the above example, it is clear that $g(x)=\frac1{|x|^4}$ belongs to $L^{\frac{N}{4},\infty}(\R^N)$ and $\norm{g}_{(\frac{N}{4},\infty)}=   \frac{N \om_N^{\frac{4}{N}}}{N-4}.$  Thus the  Hardy-Rellich inequality  \eqref{Rellich}  follows  easily from  part (i) of Theorem \ref{higherdimesionthm}.  Further, the best constant  in \eqref{Rellich}  equals to  $\frac{16}{(N-4)^2 N^2}$ which is  bounded by the constant $\frac{N^2}{(N-4)^2(N-2)^2}$  given by Remark \ref{best}. 
}
\end{remark}
 As a consequence  of the sufficiency part of  Theorem \ref{higherdimesionthm}, we have  a simple   proof  for the following Lorentz-Sobolev embedding: 
\begin{corollary}\label{BLtoLor}
	Let $\Omega \subset \mathbb{R}^N$ is an open set and $N \geq 5$. Then we have the following embedding:$$D^{2,2}_0(\Om) \hookrightarrow L^{2^{**},2}(\Om), \text{ where } 2^{**}=\frac{2N}{N-4}.$$
\end{corollary}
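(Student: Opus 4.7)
The plan is to reduce the embedding to the sufficient part of Theorem \ref{higherdimesionthm}(i) by exploiting the Lorentz-space duality $(L^{N/(N-4),1}(\Om))^{*}=L^{N/4,\infty}(\Om)$, so that no further rearrangement or Muckenhoupt computation is needed beyond what has already been done.

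First I would record the elementary identity
\[
|u|_{(2^{**},2)}^{\,2} \;=\; |u^{2}|_{(N/(N-4),\,1)}\,.
\]
This follows straight from the definition of the Lorentz quasi-norm: with $p=2^{**}=\tfrac{2N}{N-4}$ and $q=2$ one has $\tfrac{1}{p}-\tfrac{1}{q}=-\tfrac{2}{N}$, and using $(u^{2})^{*}(t)=(u^{*}(t))^{2}$ both sides evaluate to $\int_{0}^{\infty}t^{-4/N}(u^{*}(t))^{2}\dt$.

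Second, since $N/(N-4)>1$, the classical duality of Lorentz spaces (see, e.g., \cite{Hunt}) gives
\[
|u^{2}|_{(N/(N-4),\,1)} \;\le\; C \sup\Bigl\{ \int_{\Om} g\,u^{2}\dx : g\ge 0,\ |g|_{(N/4,\infty)}\le 1 \Bigr\}\,.
\]
For each such test function $g$, the sufficient condition proved in Theorem \ref{higherdimesionthm}(i)---concretely inequality \eqref{sufficiency:eqn1}---yields
\[
\int_{\Om} g\,u^{2}\dx \;\le\; C\,\norm{g}_{(N/4,\infty)} \int_{\Om}|\De u|^{2}\dx \;\le\; C\int_{\Om}|\De u|^{2}\dx\,.
\]
Taking the supremum over $g$ and combining with the identity above gives $|u|_{(2^{**},2)}^{\,2} \le C \norm{\De u}_{L^{2}(\Om)}^{\,2}$ first for $u\in\cc(\Om)$, and then for all $u\in\d2$ by density.

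The only minor subtlety---not a real obstacle---is that Theorem \ref{higherdimesionthm}(i) is phrased with the functional norm $\norm{\cdot}_{(N/4,\infty)}$ built from $g^{**}$, whereas the duality pairing is most naturally written with the quasi-norm $|\cdot|_{(N/4,\infty)}$ built from $g^{*}$. For $N\ge 5$ one has $N/4>1$, so these two are equivalent (Lemma 3.4.6 of \cite{EdEv}) and the equivalence constants can be absorbed harmlessly into $C$.
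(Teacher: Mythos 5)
Your proof is correct, but it follows a genuinely different route from the paper's. The paper reduces to $\Om=\R^N$ by zero extension and then feeds the single explicit weight $g(x)=|x|^{-4}$ into the \emph{rearranged} intermediate inequality $\int_0^{\infty}g^*(t)(u^*(t))^2\,{\rm d}t\le C\norm{g}_{(N/4,\infty)}\int|\De u|^2\dx$ (the content of \eqref{Hardy-Littlewood}--\eqref{reardec}); since $g^*(t)=(\om_N/t)^{4/N}$, the left-hand side is exactly $\om_N^{4/N}|u|_{(2^{**},2)}^2$ and the embedding drops out. Note that for this choice of $g$ the \emph{unrearranged} statement \eqref{sufficiency:eqn1} alone is just the Rellich inequality \eqref{Rellich} and would not yield the embedding, so the paper really does lean on the symmetrized form of the estimate. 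You instead use only the final statement of Theorem \ref{higherdimesionthm}(i), applied to every $g$ in the unit ball of $L^{N/4,\infty}(\Om)$, and recover $|u^2|_{(N/(N-4),1)}$ via the associate-space duality $(L^{N/(N-4),1})'=L^{N/4,\infty}$ together with the identity $(u^2)^*=(u^*)^2$. That is a clean structural argument (it makes explicit that admissibility of all of $L^{N/4,\infty}$ is essentially \emph{equivalent} to the embedding into $L^{2^{**},2}$, and it works on $\Om$ directly without extension), at the price of importing the Lorentz duality theorem, whose nontrivial direction (norm bounded by the supremum of pairings) you are invoking; the paper's proof is more computational but entirely self-contained given \eqref{reardec}. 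Your handling of the quasinorm-versus-norm equivalences for $N/4>1$ and $N/(N-4)>1$ is the right way to absorb the constants.
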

\begin{proof} Without loss of generality we may assume $\Om = \R^N$ (for a  general domain $\Om$, the result will follow by considering the zero extension to $\R^N$). By \eqref{sufficiency:eqn1}, for each $ g\in L^{\frac{N}{4},\infty}(\R^N)$  we have $$ \int_0^{\infty} g^*(t) (u^*(t))^2 \dt \leq C \norm{g}_{(\frac{N}{4},\infty)} \int_{\R^N} |\De u|^2 \dx,\, \forall\, u\in \Dr.$$
	In particular,  if we choose $g(x)=\frac{1}{|x|^4},$ then $g^*(t)=\left(\frac{\om_N}{t}\right)^{\frac{4}{N}}$ and $\norm{g}_{(\frac{N}{4},\infty)}=   \frac{N \om_N^{\frac{4}{N}}}{N-4}.$  Now by substituting in the above inequality, we get 
	$$\int_0^{\infty} t^{-\frac{4}{N}} (u^*(t))^2 \dt \leq C_1  \int_{\R^N} |\De u|^2 \dx, \, \forall\, u\in \Dr,$$ where $C_1$ is a constant that depends only on $N.$ 	
	Since $\int_0^{\infty} t^{-\frac{4}{N}} (u^*(t))^2 \dt = |u|_{(2^{**},2)}^2$  is  equivalent to  $\norm{u}^2_{(2^{**},2)}$,we obtain the required embedding
	$$\norm{u}^2_{(2^{**},2)} \le C_2 \int_{\R^N} |\De u|^2 \dx,\  \forall u\in \Dr.$$ 
	 \end{proof}
The following  lemma is needed  for the  proofs  of   Theorems \ref{Thm:Integral} and \ref{Thm:exterior}. 
 \begin{lemma}\label{Polarinequality}
      	For $u \in C_c^\infty({\R^N})$, the following inequality holds: 
      	\begin{equation*}
      	\int_0^{\infty} \int_{\mathbb{S}^{N-1}} r^{N-1}\left|\frac{\partial^2 u}{\partial t^2}(r,w)\right|^2 \diff S_w \dr \leq \int_{\R^N} |\Delta u|^2 \dx 
      	\end{equation*}
      	\end{lemma}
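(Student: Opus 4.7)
The plan is to obtain this estimate from a pointwise comparison between the radial second derivative and the full Hessian, combined with the standard Rellich-type identity $\int_{\R^N}|D^2u|^2\,dx=\int_{\R^N}|\Delta u|^2\,dx$ for compactly supported smooth functions. The appearance of the weight $r^{N-1}$ and of $\partial^2 u/\partial r^2$ should not be seen as spherical-harmonic information about $u$; it is simply the radial entry of the Hessian written in Cartesian Cauchy-Schwarz form.

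More precisely, at any $x\neq 0$, writing $\omega=x/|x|\in\mathbb S^{N-1}$, one computes
\begin{equation*}
\frac{\partial^2 u}{\partial r^2}(x)=\sum_{i,j=1}^{N}\omega_i\omega_j\,\partial_i\partial_j u(x),
\end{equation*}
so by Cauchy--Schwarz, using $\sum_i\omega_i^2=1$,
\begin{equation*}
\left|\frac{\partial^2 u}{\partial r^2}(x)\right|^2\le\Big(\sum_{i,j}\omega_i^2\omega_j^2\Big)\Big(\sum_{i,j}(\partial_i\partial_j u)^2\Big)=|D^2 u(x)|^2.
\end{equation*}
Integrating this pointwise inequality over $\R^N$ and rewriting the left-hand side in spherical coordinates gives
\begin{equation*}
\int_0^{\infty}\int_{\mathbb S^{N-1}}r^{N-1}\left|\frac{\partial^2 u}{\partial r^2}(r,\omega)\right|^2 dS_\omega\,dr=\int_{\R^N}\left|\frac{\partial^2 u}{\partial r^2}\right|^2 dx\le\int_{\R^N}|D^2 u|^2\,dx.
\end{equation*}

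It remains to recall the integration by parts identity $\int_{\R^N}|D^2 u|^2\,dx=\int_{\R^N}|\Delta u|^2\,dx$ valid for every $u\in C_c^{\infty}(\R^N)$. For each pair $i,j$, two successive integrations by parts give $\int(\partial_i\partial_j u)^2\,dx=\int\partial_i^2 u\,\partial_j^2 u\,dx$, with no boundary contribution because $u$ has compact support; summing over $i,j$ produces $|\Delta u|^2=(\sum_i\partial_i^2 u)^2$. Chaining this with the previous display yields the stated inequality.

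The only mildly delicate point is that $\partial^2 u/\partial r^2$ is a priori defined only off the origin, but since $\{0\}$ has measure zero this does not affect the integrals; the pointwise Cauchy--Schwarz bound is uniform on $\R^N\setminus\{0\}$ and that is all that is used. Thus there is no real obstacle to this argument, and I would write it exactly in the three short steps above.
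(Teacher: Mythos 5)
Your proof is correct and follows essentially the same route as the paper: identify $\partial^2 u/\partial r^2$ as the Hessian quadratic form at $\omega=x/|x|$, apply Cauchy--Schwarz to bound it pointwise by $|D^2u|^2$, pass to Cartesian coordinates, and use $\int_{\R^N}|D^2u|^2\dx=\int_{\R^N}|\Delta u|^2\dx$. The only difference is that you spell out the integration-by-parts justification of that last identity, which the paper states without proof.
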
    
      \begin{proof}
      	      Observe that 
      	\[ \frac{\partial u}{\partial \eta} = \nabla u \cdot \eta \text{ and } \frac{\partial^2 u}{\partial \eta^2}= \nabla(\nabla u \cdot \eta)\cdot \eta= \sum_{i=1}^N \sum_{j=1}^N \frac{\partial^2 u}{\partial x_i \partial x_j} \eta_i \eta_j.\]
      	            	      Further, have the following  inequality for an $N \times N$ real matrix  $A=(a_{ij})$ and $x \in \mathbb{R}^N:$
      	      \begin{equation} \label{polartoeculid23}
      	      |\langle Ax,x \rangle |^2 \leq |\sum_{i=1}^N \sum_{j=1}^N a_{ij}x_i x_j |^2 \leq \bigg(\sum_{i=1}^N \sum_{j=1}^N a_{ij}^2\bigg) \bigg(\sum_{i=1}^N x_i^2 \bigg) \bigg(\sum_{j=1}^N x_j^2\bigg).
      	      \end{equation} 
      	      Now by writing $x=(r,\omega) \in (0,\infty) \times \mathbb{S}^{N-1}$ for  $x \in \R^N\setminus\{0\}$, and using \eqref{polartoeculid23}, we obtain 
      	\begin{align*}  
      		\int_0^{\infty} \int_{\mathbb{S}^{N-1}}\bigg| \frac{\partial^2 u }{\partial r^2}(r,\omega)\bigg|^2 r^{N-1} \diff S_{\omega} \dr &=  \int_{\R^N}\bigg |\frac{\partial^2 u}{\partial r^2}\bigg|^2 \dx= \int_{\R^N} \bigg(\sum_{i=1}^N \sum_{j=1}^N \frac{\partial^2 u}{\partial x_i \partial x_j} \frac{x_i}{|x|} \frac{x_j}{|x|} \bigg)^2 \dx \no\\
      		 &\leq \int_{\R^N} \sum_{i=1}^N \sum_{j=1}^N  \bigg( \frac{\partial^2u}{\partial x_i \partial x_j}\bigg)^2 \dx = \int_{\R^N} |\Delta u|^2 \dx,  
      	\end{align*} and this concludes the proof.
      	\end{proof}           

Next we  prove   Theorem~\ref{Thm:Integral}. 
\begin{proof}[{\bf  Proof of Theorem \ref{Thm:Integral}}] For   $x \in \R^N\setminus \{0\},$ using the polar coordinates, we  write $x=(r,\omega) \in (0,\infty) \times \mathbb{S}^{N-1}$. Thus for $u \in \mathcal{C}_c^{\infty}(\R^N)$, 
\begin{align}
u(r,\omega)&= - \int_r^{\infty} \frac{\partial u}{\partial t} (t,\omega)\dt=-r \frac{\partial u}{\partial t}(r,\omega) + \int_r^{\infty}t \frac{\partial^2 u}{\partial t^2}(t,\omega) \dt   \notag\\ 
   &= \int_r^{\infty} (t-r)\frac{\partial^2 u}{\partial t^2}(t,\omega)\dt.        \label{polarineq}  
\end{align}
Hence
\begin{align*}
|u(r,\omega)| &\leq  \int_r^{\infty}t \left|\frac{\partial^2 u}{\partial t^2}(t,\omega)\right|\dt     = \int_r^{\infty} t\ t^{\frac{1-N}{2}} t^{\frac{N-1}{2}} \left|\frac{\partial^2 u}{\partial t^2}(t,\omega)\right|\dt.
\end{align*}
 Now by H\"older inequality, we get 
 \begin{align}
 |u(r,\omega)|^2 &\leq \left( \int_r^{\infty} t^2 t^{1-N}\dt \right) \left( \int_r^{\infty} t^{N-1}\left|\frac{\partial^2 u}{\partial t^2}(t,\omega)\right|^2\dt \right) \notag\\
 &=\frac{1}{N-4} r^{4-N}  \int_r^{\infty} t^{N-1}\left|\frac{\partial^2 u}{\partial t^2}(t,\omega)\right|^2\dt.   \label{polarineq1}
 \end{align}
 Multiply both  sides of \eqref{polarineq1} by $w(r)$ and  integrate over $\mathbb{S}^{N-1}$ to obtain
 \begin{align}
 \int_{\mathbb{S}^{N-1}} |u(r,\omega)|^2 w(r) \diff S_{\omega} &\leq \frac{1}{N-4} r^{4-N}  w(r) \int_0^{\infty} \int_{\mathbb{S}^{N-1}} t^{N-1}\left|\frac{\partial^2 u}{\partial t^2}(t,\omega)\right|^2 \diff S_{\omega} \dt \no\\& \leq \frac{1}{N-4} r^{4-N}  w(r)\left( \int_{\R^N} |\Delta u|^2 \dx\right),   \label{exterior higher2}
 \end{align}
 where the last inequality follows from  Lemma~\ref{Polarinequality}.
Finally, multiplying  both the sides of (\ref{exterior higher2}) by $r^{N-1}$ and integrating over $(0,\infty)$ with respect to $r$ yields:
 \begin{align*}
 \int_{\R^N} w(|x|)u^2 \dx \leq \frac{N \omega_N}{N-4} \left( \int_0^{\infty} w(r)r^3 \dr\right) \int_{\R^N} |\Delta  u|^2 \dx.
 \end{align*} 
In particular, as $g(x)\le w(|x|)$    we have 
  \begin{align*}
 \int_{\Om} g(x)u^2 \dx \leq \frac{N \omega_N}{N-4} \left( \int_0^{\infty} w(r)r^3 \dr\right) \int_{\Om} |\Delta  u|^2 \dx, \; \forall u\in \cc(\Om).
 \end{align*}
 Now by  density of  $ \cc(\Om)$, the  above inequality holds for all $u\in \d2$ and hence $g$ is admissible.
\end{proof}

 Observe that, we have two different set of conditions for the admissibility from Theorem \ref{higherdimesionthm} and Theorem \ref{Thm:Integral}. Next  examples  show that these two conditions are independent, i.e., one does not imply the other. 

      \begin{example}\rm
       Let $\Om=\mathbb{R}^N $ with $N\ge 5$ and let $\be\in (\frac{4}{N},1)$.  Consider
      \begin{equation*}
      g_1(x)= \left\{
          \begin{array}{ll}
          (|x|-1)^{-\be}, & 1<|x|\leq 2, \\
                                                         0 , & {\mbox{otherwise}}.
                                                          \end{array}\right .
      \end{equation*}
              We can compute the distribution function $\al_{g_1}$ and the one dimensional decreasing rearrangement 
 $g_1^* $  as below: 
        $$ \alpha_{g_1} (s)= \left\{
          \begin{array}{ll}
                    \omega_N 2^N -\om_N  &, \ 0 \leq s <1, \\ 
           \omega_N\left( s^{-\frac{1}{\be}}+1 \right)^N -\om_N &, \ s \geq 1,  
           \end{array} 
        \right.$$$$           
                  g_1^*(t)= \left\{ 
                      \begin{array}{ll}
                    0  & ,\ t > \omega_N(2^N-1),\\
                      \bigg(\bigg(\frac{t}{\omega_N}+1\bigg)^{\frac{1}{N}} -1\bigg)^{-\be} &,\  t \leq \omega_N(2^N-1).
                      \end{array}
                      \right .$$ 
          Hence, for $t \leq  \omega_N(2^N-1),$ 
          \begin{align*}
          t^\frac{4}{N} g_1^*(t) = & \ t^\frac{4}{N}\bigg(\bigg(\frac{t}{\omega_N}+1\bigg)^{\frac{1}{N}} -1\bigg)^{-\be}  \geq   t^\frac{4}{N}\bigg(\bigg(\frac{t}{\omega_N}+1\bigg) -1\bigg)^{-\be} = \ t^\frac{4}{N}\bigg(\frac{t}{\omega_N}\bigg)^{-\be}.
          \end{align*}
          Since $\be>\frac{4}N$, $ \sup_{t\in (0,\infty)} t^\frac{4}{N} g_1^*(t)= \infty $ and hence $g\notin L^{\frac{N}{4},\infty}(\R^N)$.\\
          Let $w(r)=  (r-1)^{-\be}\chi_{(1,2)}(r).$ Clearly $g_1(x)\le w(|x|), \forall x\in \R^N$ and since $\be<1,$
          $$\int_0^\infty w(r) r^3 \dr = \int_1^2 (r-1)^{-\be}r^3\dr\le 8\int_0^1 s^{-\be}\ds<\infty.$$
          Thus $g_1$ is admissible by Theorem \ref{Thm:Integral}.                              
           \end{example}
             \begin{example}
      {\rm   Let  $g_2(x)= \frac{1}{|x|^4},\, x\in \R^N$ with $N\ge 5.$ By Example \ref{Hardy-Rellich},  $g_2\in L^{\frac{N}{4},\infty}(\R^N)$  and hence  admissible by Theorem \ref{higherdimesionthm}. Let $w$ be a function on $(0,\infty)$ such that
      $g(x)\le w(|x|).$ Then 
      $$\int_0^\infty w(r)r^3\ge \int_0^\infty r^{-4} \times r^3 \dr= \infty.$$
      Thus $g_2$ does not satisfy the assumptions of Theorem \ref{Thm:Integral}. 
  }
                                            \end{example}
                       \begin{remark}  \rm{The above examples shows that the the sufficient conditions given by Theorem~\ref{higherdimesionthm} and Theorem~\ref{Thm:Integral} are independent. The question whether these conditions exhaust all the admissible weights or not  is open.}
                         \end{remark}
                       \begin{remark} {\rm There are admissible weights whose Schwarz symmetrization are not admissible.  For example, the Schwarz symmetrization  $g_1^\star$ of $g_1$  does  not belong to $L^{\frac{N}{4},\infty}(\Om^\star)$ and hence by part of (ii) of Theorem~\ref{higherdimesionthm},  $g_1^\star$ can not be an admissible weight.}
                       \end{remark}

  \subsection{The critical dimension ($N=4$) and the lower dimensions ($N=1,2 $  and $ 3$)}
Now we consider the cases $\Om$ is a bounded open set or an exterior domain. In either cases, the space $\d2$ is a well defined function space. 
First we give a proof of Theorem \ref{dim4thm}. 
   \begin{proof}[{\bf Proof of Theorem~\ref{dim4thm}}]
 (i) {\bf A sufficient condition}.  The proof follows in the same line as in the proof of Theorem \ref{higherdimesionthm}.
 Let $u\in  \cc(\Om)$. Then by the Hardy-Littlewood inequality \eqref{HardyLittle} we have 
\begin{equation}\label{Hardy-Littlewood1}
\int_\Om g(x) u(x)^2\dx \le \int_0^{|\Omega|} g^*(s) u^*(s)^2\ds.
\end{equation}  
Further, using \eqref{Lem:Cianchi} we have
\begin{align}\label{suffdim4}
 \int_0^{|\Omega|} g^*(s) u^*(s)^2 \ds &\leq 2\int_0^{|\Omega|} g^*(s)s^{-1} \left(\int_0^s|\Delta u|^*(t)\dt \right)^2\ds   \notag \\ 
       \quad &+ 2\int_0^{|\Omega|} g^*(s) \left(\int_s^{\infty}|\Delta u|^*(t)t^{\frac{1}{2}}\dt \right)^2\ds \no \\
       & \le C \norm{g}_{\l2(\Om)} \int_0^{|\Om|}\left(|\Delta u|^*(t)\right)^2 \dt,
 \end{align}
 where the last inequality follows from Lemma~\ref{Lem:higher}, as  $g\in  \l2(\Omega).$ From 
 \eqref{Hardy-Littlewood1} and \eqref{suffdim4}, we get 
\begin{align*}
\int_\Om g(x) u(x)^2\dx \leq \int_0^{|\Omega|} g^*(s) u^*(s)^2\ds \leq  C \norm{g}_{\l2(\Om)} \int_\Om |\De u|^2 \dx,
 \end{align*}
Thus by  density, the above inequality holds for all $u$ in $\d2$ and hence  $g$ is admissible.
 
\noi (ii) {\bf A necessary condition}. 
 Let $R\in (0,\infty)$ and let $\Om=B(0;R)\subset \R^4$. Let $g$ be a nonnegative, radial and radially decreasing  admissible function on $\Om$. 
 To  show  $g \in \l2,$ for each $r\in (0,R)$, we consider the following test function:
 \begin{align*}
           u_r(x)= \left\{
          \begin{array}{ll}
           \frac{1}{e^2} \bigg (\log (\frac{R}{r}) \bigg )^2, & |x|\leq r \\
                      \bigg ( \log (\frac{R}{|x|}) \bigg )^2  \Phi_r (x), & r<|x|<R
                                                          \end{array}\right.
\end{align*}
 where $\Phi_r (x)= \exp{\bigg ( -\frac{2 \log (\frac{R}{|x|})}{\log (\frac{R}{r})}\bigg)}$. In our computations we use the notation $D_i \equiv \frac{\partial}{\partial x_i}$ and $D_{ii} \equiv \frac{\partial^2}{\partial x_i^2}.$   For $r\le |x|\le R,$ noting that 
 $D_i \Phi_r(x)=\frac{2 x_i}{|x|^2\log (\frac{R}{r})} \Phi_r(x) $ and $D_i \log(\frac{R}{|x|} 
)=-\frac{x_i}{|x|^2},$  we compute the derivatives of $u_r$ as below:
 \begin{align*}
D_iu_r(x)=  2\Phi_r (x)\frac{x_i}{|x|^2}\log \left(\frac{R}{|x|}\right)\left[\frac{\log(\frac{R}{|x|})}{\log (\frac{R}{r})} - 1\right]. 
 \end{align*}
 Furthermore, 
 \begin{align*}
D_{ii}^2 u_r(x)= &\Phi_r(x)\left\{ \frac{4 x_i^2}{|x|^4}\frac{\log(\frac{R}{|x|})}{\log (\frac{R}{r})}+ 2\left(-\frac{2 x_i^2}{|x|^4}+\frac{1}{|x|^2}\right) \log\left(\frac{R}{|x|}\right)-\frac{2x_i^2}{|x|^4}\right\} \left[\frac{\log(\frac{R}{|x|})}{\log (\frac{R}{r})} - 1\right]\\
&-2\Phi_r (x)\frac{x_i^2}{|x|^4}\frac{\log(\frac{R}{|x|})}{\log (\frac{R}{r})}.
\end{align*}
Thus for $r\le |x|\le R$,
\begin{align*}
 \De u_r=&\Phi_r(x)\left\{\frac{4\log(\frac{R}{|x|})}{ |x|^2\log (\frac{R}{r})}+ \frac{4}{|x|^2} \log\left(\frac{R}{|x|}\right)-\frac{2}{|x|^2}\right\} \left[\frac{\log(\frac{R}{|x|})}{\log (\frac{R}{r})} - 1\right]-2\Phi_r (x)\frac{1}{|x|^2}\frac{\log(\frac{R}{|x|})}{\log (\frac{R}{r})}.
\end{align*}
Observe that $\Phi_r (x)\le 1$, $\log(\frac{R}{|x|})\le \log (\frac{R}{r}) $ and  $1\le \log (\frac{R}{r})$  for $r\le \frac{R}{e}$. Hence   
 \begin{align*}
|\De u_r(x)| \leq \frac{16}{|x|^2}\log\left(\frac{R}{|x|}\right)+\frac{4}{|x|^2}+ \frac2{|x|^2}\log\left(\frac{R}{|x|}\right) \le\frac{18}{|x|^2}\log \left(\frac{R}{|x|}\right)+\frac{4}{|x|^2}.
\end{align*}
 Thus for $r<\frac{R}{e},$ we have
 \begin{align}\label{critic:eqn1}
  \int_{\Om}|\De u_r(x)|^2 &\le  C_1 \int_{\Om \setminus B(0,r)}\left[\frac{1}{|x|^4}\log \left(\frac{R}{|x|}\right)^2 + \frac1{|x|^4}\right]\dx \no \\ & \leq C_1 \left\{\left[\log\left(\frac{R}{r}\right)\right]^3+\log\left(\frac{R}{r}\right)\right\} \leq C_1 \left[\log\left(\frac{R}{r}\right)\right]^3 ,
 \end{align}
 where $C_1$ is a positive constant independent of $r.$ Notice that $u_r$ is a $C^1$ function such that  $u_r$ and $\nabla u_r$ vanish when $|x|=R,$ hence $u_r\in H_0^2(\Om).$ 
 Further, as $g$ is radial, radially decreasing, we easily obtain the following estimate:
   \begin{align}\label{critic:eqn2}
    \int_\Om g(x) u_r^2(x) \dx \ge \int_{B(0,r)} g(x) u_r(x)^2\dx = \left[\frac1{e^2}\log\left(\frac{R}{r}\right)\right]^4 \int_0^{\om_4 r^4} g^*(s) ds
   \end{align}
  Now the admissibility of $g$ together with \eqref{critic:eqn1} and \eqref{critic:eqn2} yields  
  $$\displaystyle  \log\left(\frac{R}{r}\right) \int_0^{\om_4 r^4} g^*(s) ds \leq C, \quad \forall\, r\in (0,\frac{R}{e}).$$ 
  By taking $t=\om_4 r^4$, we get 
  $$\displaystyle  \frac14\log\left(\frac{|\Om|}{t}\right) \int_0^{t} g^*(s) ds \leq C, \quad \forall\, t\in \left(0,\frac{|\Om|}{e^4}\right).$$
  Since $t g^{**}(t)\log(\frac{|\Om|}{t})$ is bounded on $ \frac{|\Om|}{e^4} \leq t \leq |\Om|,$ from the above inequality we conclude that 
 $$ \sup_{t\in (0,|\Om|)} t g^{**}(t)\log(\frac{|\Om|}{t})< \infty.$$
  Hence $g\in \l2(\Omega).$
   \end{proof}

As a corollary of the sufficiency part of our previous theorem, we give a simple alternate proof for the embedding of $H^2_0(\Om)$ into the Lorentz-Zygmund space $L^{\infty, \;2}(\log L)^{-1}(\Omega)$ obtained independently by Brezis and Wainger \cite{Brezis_Wainger} and Hansson \cite{Hansson}.
   
   \begin{corollary}\label{CriticalEmbd}
	Let $\Omega \subset \mathbb{R}^4$ is an open bounded set. Then we have the following embedding:$$H^2_0(\Om) \hookrightarrow L^{\infty, \;2}(\log L)^{-1}(\Omega).$$
\end{corollary}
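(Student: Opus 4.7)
The plan is to mirror the strategy used for Corollary \ref{BLtoLor}, choosing a specific weight $g$ tailored to reproduce the Lorentz--Zygmund norm on the left hand side. As in that corollary, it suffices to argue for $u \in \cc(\Om)$ and then pass to $H_0^2(\Om)=\d2$ by density.

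First I would revisit the proof of Theorem~\ref{dim4thm}(i) and observe that the chain of estimates there in fact yields the stronger inequality
\begin{equation*}
\int_0^{|\Om|} g^*(s)\,(u^*(s))^2\,\ds \;\le\; C\,\norm{g}_{\l2(\Om)}\int_\Om |\De u|^2\,\dx,
\end{equation*}
for every nonnegative $g\in \l2(\Om)$, since the Hardy--Littlewood step and the invocation of Lemma~\ref{Lem:higher} only use $g^*$ on the left. Thus any permissible choice of $g^*$ (rather than of $g$ itself) will do; I will select $g$ on $\Om$ whose decreasing rearrangement is
\begin{equation*}
g^*(s)=\frac{1}{s}\left(\log\frac{e|\Om|}{s}\right)^{-2},\qquad 0<s<|\Om|.
\end{equation*}

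Next I would verify $g\in \l2(\Om)$. The substitution $y=\log(e|\Om|/s)$ gives $\ds/s=-\mathrm{d}y$, hence
\begin{equation*}
g^{**}(t)=\frac{1}{t}\int_0^t \frac{1}{s}\left(\log\frac{e|\Om|}{s}\right)^{-2}\ds=\frac{1}{t}\int_{\log(e|\Om|/t)}^{\infty} y^{-2}\,\mathrm{d}y=\frac{1}{t\,\log(e|\Om|/t)},
\end{equation*}
so that
\begin{equation*}
t\,\log\left(\frac{|\Om|}{t}\right)g^{**}(t)=\frac{\log(|\Om|/t)}{\log(e|\Om|/t)}\le 1,\qquad 0<t<|\Om|,
\end{equation*}
proving $\norm{g}_{\l2(\Om)}\le 1$. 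Inserting this $g$ into the displayed inequality above yields
\begin{equation*}
\int_0^{|\Om|}\left(\log\frac{e|\Om|}{s}\right)^{-2}\frac{(u^*(s))^2}{s}\,\ds\;\le\; C\int_\Om |\De u|^2\,\dx,
\end{equation*}
and the left hand side is exactly $|u|_{(\infty,2,-1)}^2$.

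Finally, by Proposition~\ref{equivnorm} (the equivalence of the quasinorm $|\cdot|_{(\infty,2,-1)}$ and the norm $\norm{\cdot}_{(\infty,2,-1)}$ proved in the appendix), we obtain $\norm{u}_{(\infty,2,-1)}\le C'\,\norm{\De u}_{L^2(\Om)}$ for all $u\in \cc(\Om)$, and density concludes the embedding. The only nontrivial step is the guess of the correct $g^*$; everything else reduces to the already-established Theorem~\ref{dim4thm}(i) together with the appendix's quasinorm--norm equivalence, so I do not anticipate any genuine obstacle beyond carrying out the elementary substitution above.
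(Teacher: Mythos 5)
Your proposal is correct and follows essentially the same route as the paper: both exploit the intermediate inequality $\int_0^{|\Om|} g^*(s)(u^*(s))^2\,\ds \le C\norm{g}_{\l2(\Om)}\int_\Om|\De u|^2\dx$ from the proof of Theorem~\ref{dim4thm}(i) with a weight whose rearrangement is (up to a constant) $s^{-1}\bigl(\log\tfrac{e|\Om|}{s}\bigr)^{-2}$, and then invoke Proposition~\ref{equivnorm}. The only cosmetic difference is that the paper realizes this weight as the explicit radial function $g_1(x)=\bigl[|x|^2\log((R/|x|)^4e)\bigr]^{-2}$ on a ball and reduces general bounded $\Om$ to that case by zero extension, whereas you prescribe $g^*$ directly, which is harmless since the relevant inequality and the $\l2$-norm depend only on $g^*$.
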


\begin{proof} First, assume that $\Om$  is  a ball of radious $R$ with center at the origin. Let $X=\l2(\Om).$ 
For each $g \in X,$ \eqref{suffdim4} gives, $$ \int_0^{|\Om|} g^*(t)(u^*(t))^2 \dt  \leq C \norm{g}_X \int_{\Om} |\De u|^2 \dx,\ \forall u \in H_0^2(\Om).$$
	Let $g_1(x)= \left[|x|^2 \log((\frac{R}{|x|})^4e)\right]^{-2}, x \in \Om.$ We calculate, \tr{$g_1^*$} $g_1^{**}(t)= \displaystyle\frac{\om_4}{t(\log{\frac{e|\Om|}{t}})} , t\in (0,|\Om|).$ Therefore, $g_1\in X $ and $ \norm{g_1}_X=\om_4$. Thus by the above inequality we have
	$$  \int_0^{|\Om|} \displaystyle\frac{(u^*(t))^2}{t \left[\log \left(\frac{e|\Om|}{t}\right)\right]^2} {\dt} \leq C_1 \int_{\Om} |\De u|^2 \dx ,\ \forall u \in H_0^2(\Om).$$
	The left hand side of the above inequality is equivalent to ${\norm{u}}^2_{L^{\infty ,\; 2} (\log L)^{-1}(\Omega)}$(Proposition \ref{equivnorm}). Therefore,
$$ \norm{u}_{L^{\infty ,\; 2} (\log L)^{-1}(\Omega)}^2 \le  C_2 \int_{\Om} |\De u|^2 \dx , \forall u\in H_0^2(\Om).$$
		Now for a general bounded set $\Omega,$ there exists $ R >0$ such that $\Omega \subset B(0,R).$ In this case,  we obtain the required embedding by considering the above inequality for the zero extension to $ B(0,R).$
	
\end{proof}

     \begin{remark}\rm \label{rem1dim4}
     For a bounded open set, we have the following continuous inclusions: $$L^{\infty ,\; 2} (\log L)^{-1}(\Omega) \hookrightarrow L_{e^{t^2}-1}(\Om) \hookrightarrow L^p(\Om),1\le p <\infty.$$ Thus  the above embedding gives the classical  Sobolev embedding and Adams' embedding: 
     $$ H_0^2(\Om) \hookrightarrow L^p(\Om), 1\le p <\infty;\quad H_0^2(\Om) \hookrightarrow L_{e^{t^2}-1}(\Om).$$
        \end{remark}

Next we give a proof of Theorem~\ref{Thm:exterior} for the cases $N=2,3,4.$

\begin{proof}[Proof of Theorem~\ref{Thm:exterior} for the cases $N=2,3,4$. ]
	
As before, for $x \in B_R \setminus B_1$, we write $x=(r,\omega) \in (1,R) \times \mathbb{S}^{N-1} $. For $u\in \cc(\Om),$ we use the fundamental theorem of calculus to get 
	\begin{equation*}
		u(r,\omega)=  \int_1^r \frac{\partial u}{\partial t} (t,\omega)\dt.
	\end{equation*}
	As in the proof of Theorem~\ref{Thm:Integral}, we deduce
	\begin{equation*} 
		u(r,\omega) = \int_1^r (r-t) \frac{\partial^2 u}{\partial t^2}(t,\omega)\dt
		= \int_1^r (r-t) t^{-\frac{N-1}{2}} t^{\frac{N-1}{2}}\frac{\partial^2 u}{\partial t^2}(t,\omega)\dt.
	\end{equation*} 
	Now  H\"older inequality yields
	\begin{align*}  
		|u(r,\omega)|^2 &\leq r^2 \bigg(\int_1^r    t^{-(N-1)} \dt\bigg) \bigg( \int_1^r t^{N-1}\bigg|\frac{\partial^2 u}{\partial t^2}(t,\omega)\bigg| ^2dt\bigg) .
	\end{align*}
	 Multiply the above inequality by $r^{N-1}w(r)$ and integrate over $\mathbb{S}^{N-1} \times (1,R)$ and use   Lemma~\ref{Polarinequality} to obtain 
	\begin{align} 
		\int_{B_R\setminus B_1} w(|x|)u^2 \dx &\leq \bigg(\int_1^{R}  \bigg[\int_1^r  t^{1-N} \dt\bigg]r^{N+1} w(r) \dr\bigg) \int_{\mathbb{S}^{N-1}} \int_1^R t^{N-1}\bigg|\frac{\partial^2 u}{\partial t^2}(t,\omega)\bigg| ^2dt \diff S_{\omega}\no\\
		& \leq I \times \bigg(\int_{B_R\setminus B_1}|\Delta u|^2 \dx\bigg), \label{exteriorlower3}
	\end{align}
	where $ I=\bigg(\int_1^{R} r^{N+1} \bigg[\int_1^r  t^{1-N} \dt\bigg] w(r) \dr\bigg)$. Notice that 
	\begin{align}\label{exteriorlower4}
		I \leq \begin{cases}
			\int_{1}^{\infty}  r^{N+1} w(r) \dr,   \ \ \ N=3,4; R=\infty\\ 
			\int_1^{\infty} r^3 (\log r) w(r )\dr, \ \ \ N=2; R =\infty\\
			\int_1^R w(r)\dr,\ \ \  2 \leq N \leq 4; R<\infty.
		\end{cases}
	\end{align}
	 Therefore, the assumptions on $g$ together with \eqref{exteriorlower3} and \eqref{exteriorlower4} gives the admissibility of $g$. 	 
\end{proof}

\begin{remark}\label{embeddings}{\rm 
Let $f\in L^1(1,\infty)$ and $f\ge 0.$ Then by  Theorem \ref{Thm:exterior}, we have the following embeddings:
 $$\D^{2,2}_0(R^N\setminus \bar{B_1}) \hookrightarrow \left \{ \begin{array}{ll}
 L^2(\Om,\frac{f(|x|)}{|x|^{N+1}}), & N= 3,4; \\
 L^1(\Om,\frac{f(|x|)}{|x|^3\log(|x|)}),  & N=2;
 \end{array} \right. .$$
 For example, one can take $f(r)=\frac{1}{r^2}.$ Thus $\D^{2,2}_0(R^N\setminus \bar{B_1})$ is always a well defined function space. }
 \end{remark}

\begin{remark}
\rm For an annular region $\Om \subset \R^4,$  let $g$ be a nonnegative radial  function such that   $g\in L^1(\Om)$  and  $g\notin \l2(\Om)$.   Then   $g$  is admissible by the above theorem, however $g^\star$ is not admissible on $\Om^\star$ by  part (ii) of Theorem \ref{dim4thm}. An example of such  weight is 
 $ g(x)= \displaystyle\frac{1}{(|x|^4 -1)( \log\frac{16e}{(|x|^4 -1)})^{\frac{3}{2}}}$ on $B(0;2) \setminus B(0;1)$.
\end{remark}

\begin{theorem}
 Let $\Om$ be a bounded open set  in $\R^N$ with $N=1,2$ or 3. Then $g\in L^1(\Om)$ with $g\ge 0$ is admissible.
\end{theorem}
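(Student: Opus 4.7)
The plan is to exploit the low dimension via a Sobolev embedding of the form $\d2 \hookrightarrow L^{\infty}(\Om)$, after which the admissibility of any $g\in L^1(\Om)$ is essentially immediate.

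First, I would establish the a priori estimate $\|u\|_{L^\infty(\Om)} \le C \|\De u\|_{L^2(\Om)}$ for $u\in\cc(\Om)$. Extending $u$ by zero to $\R^N$, Plancherel's identity gives
\begin{equation*}
\sum_{i,j=1}^N \int_{\R^N} |D_{ij}u|^2 \dx = \int_{\R^N} |\hat u(\xi)|^2\!\!\sum_{i,j}\xi_i^2\xi_j^2 \diff\xi = \int_{\R^N}|\hat u(\xi)|^2 |\xi|^4\diff\xi = \int_{\R^N}|\De u|^2\dx,
\end{equation*}
so the full Hessian is controlled by $\|\De u\|_{L^2}$. Since $\Om$ is bounded and $u$ vanishes on $\pa\Om$, the Poincar\'e inequality together with $\|\Gr u\|_{L^2}^2 = -\int u\De u \le \|u\|_{L^2}\|\De u\|_{L^2}$ yields $\|u\|_{L^2(\Om)} + \|\Gr u\|_{L^2(\Om)} \le C\|\De u\|_{L^2(\Om)}$. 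Thus $\|u\|_{H^2(B)}\le C\|\De u\|_{L^2(\Om)}$ for any ball $B\supset \Om$. Because $2\cdot 2 > N$ for $N\in\{1,2,3\}$, the standard Sobolev embedding $H^2(B)\hookrightarrow L^\infty(B)$ applies, producing the desired inequality
\begin{equation*}
\|u\|_{L^\infty(\Om)} \le C\|\De u\|_{L^2(\Om)},\qquad \forall u\in\cc(\Om).
\end{equation*}

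Second, for any nonnegative $g\in L^1(\Om)$ and $u\in\cc(\Om)$, I bound $u^2$ by its supremum:
\begin{equation*}
\int_\Om g(x) u(x)^2 \dx \le \|u\|_{L^\infty(\Om)}^2 \int_\Om g(x)\dx \le C^2 \|g\|_{L^1(\Om)} \int_\Om |\De u|^2 \dx.
\end{equation*}
Extending this by the density of $\cc(\Om)$ in $\d2$ yields the admissibility of $g$.

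There is no real obstacle in this argument; the only point requiring care is the extension-by-zero step needed to invoke the Sobolev embedding on a regular ball rather than on the possibly irregular set $\Om$, which is legitimate because $u\in\cc(\Om)$ vanishes near $\pa\Om$.
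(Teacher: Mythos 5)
Your proposal is correct and follows essentially the same route as the paper: the paper simply invokes the known embedding $H^2_0(\Om)\hookrightarrow L^{\infty}(\Om)$ for $N\le 3$ and then bounds $\int_\Om g u^2\dx\le \norm{g}_{L^1}\norm{u}_{L^\infty}^2$. The only difference is that you additionally derive the embedding from scratch (Plancherel, Poincar\'e, and the Sobolev embedding $H^2\hookrightarrow L^\infty$ for $4>N$), which is a correct but optional elaboration.
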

\begin{proof}
For $N=1,2,3,$  $H^2_0(\Om)$ is continuously embedded into $L^{\infty}(\Om)$.
Hence, for $g\in  L^1(\Om),$ 
$$ \int_{\Om} gu^2 \dx  \leq \norm {g}_{1} {\norm u}_{\infty}^2\le C \norm {g}_{1} \int_{\Om} |\Delta u|^2 \dx, \forall\, u\in H^2_0(\Om),$$
where $C$ is the embedding constant. 
\end{proof}

 \begin{remark}
 \rm Let $\Om$ and $X$ be as in Lemma \ref{Lem:higher} and let $$\F_X= \overline{\cc(\Om)}\subset X.$$  Then for $g\in \F_X$, the best constant in inequality \eqref{GHR} is attained. This will follow as the map $G:\d2 \ra \R$ defined by $G(u)=\int_\Om g u^2\dx$ is compact. The compactness of $G$ can be prove using a similar set of arguments as in the proofs of Lemma 15 of \cite{anoop} and Lemma 5.1 \cite{ALM}.
 \end{remark}

  \begin{appendices}
\section{}
First we see that  as a vector space  $\L2(\Om)$ and  $\l2(\Om)$ are same.  
\begin{proposition}\label{equiv}
     Let  $\Omega \subset \mathbb{R}^N$ be a bounded open set. Then $\L2(\Om) =  \l2(\Om)$.
  \end{proposition}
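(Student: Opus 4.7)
The plan is to prove the set equality by establishing the two inclusions separately, with both arguments carried out at the level of the decreasing rearrangement $f^*$ and its maximal function $f^{**}(t) = t^{-1}\int_0^t f^*(s)\,ds$. The forward inclusion $\L2(\Om) \subseteq \l2(\Om)$ is the easier of the two: for $f \in \L2(\Om)$ with $K := |f|_{(1,\infty,2)}$, the pointwise bound $f^*(s) \le K/(s(\log(e|\Om|/s))^2)$ integrates neatly under the substitution $u = \log(e|\Om|/s)$ to give
\[
 \int_0^t f^*(s)\,ds \;\le\; K \int_0^t \frac{ds}{s(\log(e|\Om|/s))^2} \;=\; \frac{K}{\log(e|\Om|/t)},
\]
and multiplying by $\log(|\Om|/t)$, together with $\log(|\Om|/t) \le \log(e|\Om|/t)$, yields $\norm{f}_{\l2} \le K$.

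For the reverse inclusion $\l2(\Om) \subseteq \L2(\Om)$, the natural starting point is the monotonicity of $f^*$: from $(t/2)\,f^*(t) \le \int_{t/2}^t f^*(s)\,ds \le t\,f^{**}(t)$ one obtains $f^*(t) \le 2\,f^{**}(t)$, and for $f \in \l2$ with $\norm{f}_{\l2} = K$ this gives the single-logarithm estimate $t\,\log(|\Om|/t)\,f^*(t) \le 2K$. Plugging this into the target quantity yields only $t\,(\log(e|\Om|/t))^2 f^*(t) \le 2K\,(\log(e|\Om|/t))^2/\log(|\Om|/t)$, which is of order $\log(|\Om|/t)$ as $t \to 0$ and hence short by a full power of the logarithm.

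Closing this logarithmic gap is where I expect the main obstacle. The refinement I would try uses the sharper telescoping identity $(t/2)\,f^*(t) \le t\,f^{**}(t) - (t/2)\,f^{**}(t/2)$ combined with the $\l2$ bound at both scales: in the extremal case $f^*(t) = K/(t\,\log^2(|\Om|/t))$, a direct computation shows that $t\,f^{**}(t) - (t/2)\,f^{**}(t/2)$ is exactly of order $K/\log^2(|\Om|/t)$, so iterating dyadically at scales $t, t/2, t/4, \ldots$ and exploiting that $f^{**}$ is non-increasing should in principle recover the missing logarithm. Verifying that the constants remain uniform as $t \to 0$, and that the telescoped differences indeed behave as in the extremal case for every non-increasing $f^*$ satisfying the $\l2$ constraint rather than only for those that saturate the $\l2$ bound, is the step I expect to require the most care.
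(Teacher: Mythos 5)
Your forward inclusion $\L2(\Om)\subseteq\l2(\Om)$ is correct and is essentially the paper's own argument. The entire content of the proposition, however, sits in the reverse inclusion, and there your proposal is a plan rather than a proof --- and the plan aims at a target that is provably unreachable. You are trying to establish a uniform bound $t\,(\log(e|\Om|/t))^2 f^*(t)\le C\,\norm{f}_{\l2}$. No such constant exists: take $f^*=M\chi_{(0,a)}$ with $a$ small. Then $\norm{f}_{\l2}=\sup_t M\min(t,a)\log(|\Om|/t)=Ma\log(|\Om|/a)$, while $\sup_t t(\log(e|\Om|/t))^2 f^*(t)=Ma(\log(e|\Om|/a))^2$, so the ratio of the two functionals is of order $\log(|\Om|/a)\to\infty$. (The paper itself records, in the remark following the proposition, that the two functionals are not equivalent.) Dyadic telescoping cannot get around this: summing the identities $\int_{t/2^{k+1}}^{t/2^k}f^* = \tfrac{t}{2^k}f^{**}(t/2^k)-\tfrac{t}{2^{k+1}}f^{**}(t/2^{k+1})$ merely reconstitutes $t f^{**}(t)$, i.e.\ it controls a dyadic \emph{average} of $f^*$, which is exactly the quantity you already control; pointwise at a single scale it gives nothing beyond $f^*(t)\le 2f^{**}(t)$, because you have no lower bound on $f^{**}(t/2)$. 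The characteristic function above is precisely the case where $f^*$ at one scale exhausts the whole $f^{**}$ budget, and it defeats any argument of this quantitative type.

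Since the two functionals are not equivalent, any proof of $\l2\subseteq\L2$ must be qualitative; the paper argues by contradiction, claiming that if $g(t):=t(\log(e|\Om|/t))^2f^*(t)$ is unbounded then $g(t)\to\infty$ as $t\to0$, so that $f^*(t)\ge n\,t^{-1}(\log(e|\Om|/t))^{-2}$ on a whole interval $(0,t_n)$ and integration yields $\log(|\Om|/t_n)\int_0^{t_n}f^*\ge n\,\log(|\Om|/t_n)/\log(e|\Om|/t_n)\to\infty$, contradicting $f\in\l2$. But be aware that the upgrade from ``$\sup g=\infty$'' to ``$g(t)\to\infty$'' is itself the delicate point, because $g$ can oscillate across scales. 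Indeed, on $|\Om|=1$ take $f^*=\sum_k M_k\chi_{(0,a_k)}$ with $a_k=e^{-2^k}$ (so $a_{k+1}=a_k^2$) and $M_k=1/(a_k\log(1/a_k))$: one checks that $\log(1/t)\int_0^t f^*$ stays bounded (each block contributes $L^1$-mass $2^{-k}$, and $t\log(1/t)\sum_{j\le k}M_j\le 2$ on $(a_{k+1},a_k)$), while $t(\log(e/t))^2 f^*(t)$ is of order $\log(1/a_k)=2^k$ just below $t=a_k$ and tends to $0$ along $t=a_k^{3/2}$. Such an $f$ lies in $\l2$ but not in $\L2$, so the reverse inclusion --- and with it the proposition as stated --- is itself in doubt. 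In short: your first inclusion is fine; your second is not salvageable along the proposed quantitative lines; and rather than hunting for the missing logarithm you should test the statement against multi-scale examples of exactly this kind.
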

  \begin{proof}
  First we show that $\ \norm{f}_{\l2}  \le  |f|_{(1,\infty,2)}.$  For  $f \in \Me(\Om)$ and   $t\in (0,|\Omega|),$  we have
  \begin{align*}
 t f^{**}(t)  &= \int_0^t f^*(s) \ds =
   \int_0^t f^*(s) s \left [\log \left( \frac{|\Omega|}{s}\right) \right ] ^2 \frac{1}{s \left[ \log \left( \frac{|\Omega|}{s}\right) \right ]^2} \ds \\
  &\leq \sup_{0<s\leq t}f^{*}(s) \,s\, \left [\log \left( \frac{|\Omega|}{s}\right) \right ]^2 \int_0^t  \frac{1}{s \left [\log \left( \frac{|\Omega|}{s}\right)\right]^2} \ds\\
  &\le  |f|_{(1,\infty,2)}\frac{1}{  \log \left( \frac{|\Omega|}{t}\right)}.
  \end{align*}
     This yields   $
  \norm{f}_{\l2}  \le  |f|_{(1,\infty,2)}
  $
  and hence  $$\L2(\Om) \subseteq  \l2(\Om).$$
  If  the above inclusion is strict, then $\exists f \in \l2(\Om)\setminus  \L2(\Om),$ i.e.,    $$\sup_{0<t< |\Om|}  f^{**}(t) \,t\left [\log \left( \frac{|\Omega|}{t}\right) \right ]<\infty;\quad \sup_{0<t< |\Om|}f^{*}(t) \,t\,\left [ \log \left( \frac{e |\Omega|}{t}\right) \right ]^2 = \infty.$$ Now  consider the function $$g(t)=f^{*}(t) \,t\,\left [ \log \left( \frac{e |\Omega|}{t}\right) \right ]^2,  0<t<|\Om|.$$
 Claim: $\displaystyle \lim_{t\ra 0} g(t) = \infty.$ \\
If  the claim is not true, then  $\exists t_0>0$ such that $\sup_{t\ge t_0} g(t)  = \infty.$  Since $\,t\,\left [ \log \left( \frac{e |\Omega|}{t}\right) \right ]^2$ is bounded, we must have $f^*(t)=\infty$ for $t\le t_0$. A contradiction as $f\in \l2(\Om)$ hence claim must be true.\\
Now by the claim,  there exists a decreasing sequence $(t_n)$ in $(0,|\Om|)$ such that $(t_n)$  converging to $0$ and  $g(t)> n,$ for $t\in (0, t_n).$ 
  Consequently, 
  \begin{align*}
  t_n f^{**}(t_n) &=  \int_0^{t_n} \frac{g(t)}{t\,\left [ \log \left( \frac{e |\Omega|}{t}\right) \right ]^2} dt
     \geq n \int_0^{t_n} \frac{1}{t\,\left [ \log \left( \frac{e |\Omega|}{t}\right) \right ]^2} dt \ge   \frac{n}{\log \left( \frac{e |\Omega|}{t_n}\right)}.
  \end{align*}
  Therefore,  $$\lim_{n \ra \infty} t_n f^{**}(t_n) \log \left( \frac{|\Omega|}{t_n}\right) \ge \lim_{n \ra \infty} n \frac{\log \left( \frac{|\Omega|}{t_n}\right) }{\log \left( \frac{e |\Omega|}{t_n}\right)}=\infty.$$  A contradiction as $f \in \l2(\Om).$ Hence we must have $\L2(\Om) =  \l2(\Om)$. 
 
  \end{proof}
  
  \begin{remark}{\rm
   The quasinorm $|f|_{(1,\infty,2)}$ and the norm $\norm{f}_{\l2}$ defines the same vector space, however, they are not equivalent.  To see this,  let  $\Om=B(0;R)  \subset \R^N$ and for each $n\in \N,$ consider the function $\{f_n \}$ on $\Om $ defined as
  $$f_n(x)= \displaystyle\frac{1}{|x|^N [\log((\frac{R}{|x|})^N e)]^{n+2}}.$$ 
Thus we have $f_n^*(t)= \displaystyle\frac{\om_N}{t [\log(\frac{e |\Om|}{t})]^{n+2}}$ and $f_n^{**}(t)= \displaystyle\frac{\om_N}{(n+1) t [\log(\frac{e |\Om|}{t})]^{n+1}}.$
 Therefore, \begin{align*}
 |f_n|_{(1,\infty,2)}&=\sup_{0<t<|\Om|} t\left[\log\left(\frac{e|\Om|}{t}\right)\right]^2f_n^*(t) = \om_N\sup_{0<t<|\Om|} \frac{1 }{ \left[\log\left(\frac{e|\Om|}{t}\right)\right]^{n}},\\
  \norm{f_n}_{\l2}&=\sup_{0<t<|\Om|} t\left[\log\left(\frac{|\Om|}{t}\right)\right]f_n^{**}(t)\le \frac{\om_N}{n+1}\sup_{0<t<|\Om|} \frac{1 }{ \left[\log\left(\frac{e|\Om|}{t}\right)\right]^{n}}.
  \end{align*}
 Hence $(n+1)\norm{f_n}_{\l2}\le  |f_n|_{(1,\infty,2)}.$}
  \end{remark}
The next proposition provides  the equivalence of the quasinorm $ |u|_{(\infty,1,-2)}$ and the norm $ \norm{u}_{(\infty,1,-2)}.$ We adapt the proof
  of   Theorem 6.4 of \cite{ Bennet} to our case.
  \begin{proposition}\label{equivnorm}
 Let $\Om$ be a bounded subset of $\R^N$ and $u: \Om \ra \R$ be a measurable function. Then there exist a constant $C> 0$ such that
 $$ \int_0^{|\Om|} \left(\frac{ u^{**}(t)}{\log(\frac{e|\Om|}{t})} \right)^2 \frac{dt}{t} \leq C \int_0^{|\Om|} \left(\frac{ u^{*}(t)}{\log(\frac{e|\Om|}{t})}\right)^2 \frac{dt}{t}.$$
\end{proposition}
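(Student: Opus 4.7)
My plan is to recognize the inequality as a weighted $L^2$ Hardy inequality and to apply the Muckenhoupt criterion from Theorem~\ref{Muckenhoupt_condition}(i). Setting $f(s)=u^{*}(s)$ and noting the identity $tu^{**}(t)=\int_{0}^{t} f(s)\,\ds$, the statement rewrites as
\begin{equation*}
 \int_{0}^{|\Om|}\frac{1}{s^{3}\log^{2}\!\big(\tfrac{e|\Om|}{s}\big)}\!\left(\int_{0}^{s}f(\tau)\,d\tau\right)^{\!2}\!\ds
\;\le\; C\int_{0}^{|\Om|}\frac{f(s)^{2}}{s\log^{2}\!\big(\tfrac{e|\Om|}{s}\big)}\,\ds,
\end{equation*}
which is precisely \eqref{Muck1} with $a=|\Om|$, $u(s)=s^{-3}\log^{-2}(\tfrac{e|\Om|}{s})$ and $v(s)=s^{-1}\log^{-2}(\tfrac{e|\Om|}{s})$. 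Since $u^{*}$ ranges over arbitrary nonnegative measurable functions, it suffices to show that the Muckenhoupt constant
\begin{equation*}
 A_{1}=\sup_{0<t<|\Om|}\left(\int_{t}^{|\Om|}\frac{\ds}{s^{3}\log^{2}(\tfrac{e|\Om|}{s})}\right)\!\left(\int_{0}^{t}s\log^{2}\!\big(\tfrac{e|\Om|}{s}\big)\ds\right)
\end{equation*}
is finite.

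To evaluate both factors in a unified way, I would perform the logarithmic substitution $\sigma=\log(\tfrac{e|\Om|}{s})$, which maps $s\in(0,|\Om|)$ bijectively onto $\sigma\in(1,\infty)$ with $s=e|\Om|\,e^{-\sigma}$ and $\ds=-s\,d\sigma$. Writing $T=\log(\tfrac{e|\Om|}{t})\in[1,\infty)$, a short computation converts the two factors respectively into
\begin{equation*}
 \frac{1}{(e|\Om|)^{2}}\int_{1}^{T}\frac{e^{2\sigma}}{\sigma^{2}}\,d\sigma
 \qquad\text{and}\qquad
 (e|\Om|)^{2}\int_{T}^{\infty}\sigma^{2}\,e^{-2\sigma}\,d\sigma,
\end{equation*}
so the geometric factors $(e|\Om|)^{\pm 2}$ cancel in the product and finiteness of $A_{1}$ reduces to a uniform bound in the single parameter $T\ge 1$.

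The second integral evaluates explicitly via two integrations by parts to $\tfrac{1}{4}e^{-2T}(2T^{2}+2T+1)$, which is $\le 2T^{2}e^{-2T}$ for $T\ge 1$. The first integral has no elementary antiderivative and is the main obstacle: the naive bound $\sigma^{-2}\le 1$ yields $\int_1^T e^{2\sigma}/\sigma^2 \, d\sigma \le \tfrac{1}{2}e^{2T}$, which loses the crucial $T^{-2}$ decay and makes the product unbounded as $T\to\infty$. I would instead split the range at $\sigma=T/2$: on $[T/2,T]$ the bound $\sigma^{-2}\le 4/T^{2}$ combined with $\int_{T/2}^T e^{2\sigma}d\sigma\le \tfrac{1}{2}e^{2T}$ gives a contribution at most $2e^{2T}/T^{2}$, while on $[1,T/2]$ the bound $e^{2\sigma}\le e^{T}$ with $\int_1^{T/2} \sigma^{-2}d\sigma\le 1$ gives a contribution at most $e^{T}$, which is dominated by $e^{2T}/T^{2}$ for $T$ sufficiently large (the remaining bounded range of $T$ is handled by continuity and strict positivity of the factors). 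Multiplying the two estimates, the exponential and polynomial dependences cancel exactly, yielding $A_{1}\le C$ and the proposition then follows from Theorem~\ref{Muckenhoupt_condition}(i).
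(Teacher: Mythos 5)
Your proof is correct, but it follows a genuinely different route from the paper. The paper proves Proposition \ref{equivnorm} by the Bennett--Rudnick device: write $u^*(s)=[s^{\delta}u^*(s)]\,s^{1-\delta}\,s^{-1}$ for some $0<\delta<1$, apply H\"older to $\left(\int_0^t u^*\right)^2$, then interchange the order of integration by Fubini and use the monotonicity of $t^{-\delta}\log^{-2}(e|\Om|/t)$ to absorb the inner integral. You instead fold the statement back into the Muckenhoupt criterion of Theorem \ref{Muckenhoupt_condition}(i) with $u(s)=s^{-3}\log^{-2}(e|\Om|/s)$, $v(s)=s^{-1}\log^{-2}(e|\Om|/s)$, and verify $A_1<\infty$ by the logarithmic substitution; your splitting of $\int_1^T e^{2\sigma}\sigma^{-2}\,d\sigma$ at $\sigma=T/2$ is exactly what is needed to recover the $T^{-2}$ decay (the naive bound does fail, as you note), and the resulting product is uniformly bounded, so the argument closes. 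Your approach is more consonant with the rest of the paper --- it reuses the machinery of Lemma \ref{Lem:higher} and, via \eqref{Muckbest}, yields a reasonably explicit constant, while also giving the Hardy inequality for arbitrary measurable $f$, not just for decreasing rearrangements; the paper's H\"older--Fubini argument is self-contained (no appeal to Muckenhoupt's theorem) and adapts more transparently to other exponents and logarithmic powers. One small inaccuracy in your write-up: $u^*$ ranges only over nonincreasing functions, not over ``arbitrary nonnegative measurable functions''; this is harmless, since you only invoke the sufficiency direction of the Muckenhoupt theorem, for which $A_1<\infty$ gives the inequality for all measurable $f$ and in particular for $f=u^*$.
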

\begin{proof}
 Choose $ 0 < \delta <1$ and write $u^*(s)=[s^{\delta}u^*(s)][s^{1-\delta}] s^{-1}$. Using Holder's inequality we obtain,
 \begin{eqnarray}
 \left(\int_0^t u^*(s) ds \right)^2 \leq C_1 t^{2- 2 \delta} \left(\int_0^t [s^{\delta}u^*(s)]^2 \frac{ds}{s} \right).
  \end{eqnarray}
  Multiplying by $\displaystyle \left(\frac{1}{t^3 (\log(\frac{e|\Om|}{t}))^2 } \right)$ and integrating over $(0,|\Om|)$ we get
 \begin{eqnarray}\label{embedprop1} 
   \int_0^{|\Om|} \left(\frac{ u^{**}(t)}{\log(\frac{e|\Om|}{t})} \right)^2 \frac{dt}{t} & \leq & C_1 \int_0^{|\Om|} \frac{1}{t^{2 \delta} (\log(\frac{e|\Om|}{t}))^2} \left( \int_0^t [s^{\delta}u^*(s)]^2 \frac{ds}{s}  \right) \frac{dt}{t} \nonumber \\
                                                                                         & \leq & C_1 \int_0^{|\Om|} [s^{\delta}u^*(s)]^2  \left( \int_s^{|\Om|} \frac{1}{t^{2 \delta} (\log(\frac{e|\Om|}{t}))^2} \frac{dt}{t} \right)  \frac{ds}{s} \nonumber \\
                                                                                         & \leq & C_1 \int_0^{|\Om|} \frac{[s^{\delta}u^*(s)]^2}{s^{\delta} (\log(\frac{e|\Om|}{s}))^2 } \left( \int_s^{|\Om|} \frac{dt}{t^{1+ \delta}} \right)  \frac{ds}{s} .
                                                                                         \end{eqnarray} 
 The last two inequalities of $\eqref{embedprop1}$ follows from Fubini's theorem and monotonic decreasing property of $ \displaystyle\frac{1}{t^{\delta} (\log(\frac{e|\Om|}{t}))^2 }$ respectively. Further, we estimate the right hand side of
 $\eqref{embedprop1}$  as below,
 \begin{eqnarray}\label{embedprop2} 
   \int_0^{|\Om|} \frac{[s^{\delta}u^*(s)]^2}{s^{\delta} (\log(\frac{e|\Om|}{s}))^2 } \left( \int_s^{|\Om|} \frac{dt}{t^{1+ \delta}} \right)  \frac{ds}{s} \leq C_2 \int_0^{|\Om|} \frac{[s^{\delta}u^*(s)]^2}{s^{\delta} (\log(\frac{e|\Om|}{s}))^2 } \left( \frac{1}{s^{\delta}} \right)  \frac{ds}{s}.
  \end{eqnarray}
  Hence by combining $\eqref{embedprop1}$ and $\eqref{embedprop2}$ we have the following inequality   as required
  $$ \int_0^{|\Om|} \left(\frac{ u^{**}(t)}{\log(\frac{e|\Om|}{t})} \right)^2 \frac{dt}{t} \leq C \int_0^{|\Om|} \left(\frac{ u^{*}(t)}{\log(\frac{e|\Om|}{t})}\right)^2 \frac{dt}{t}.$$
\end{proof}
  \end{appendices}

\bibliography{ref1}
\bibliographystyle{abbrv}
{\bf  T. V.  Anoop } \\  Department of Mathematics,\\   Indian Institute of Technology Madras, \\ Chennai, 600036, India. \\ 
{\it Email}:{ anoop@iitm.ac.in}
		
	\noi {\bf	 Ujjal Das }\\  The Institute of Mathematical Sciences, HBNI \\ Chennai, 600036, India. \\
	 {\it Email}: ujjaldas@imsc.res.in, ujjal.rupam.das@gmail.com
		
	 \noi {\bf  Abhishek Sarkar}\\ NTIS, University of West Bohemia\\
		Technick\'{a} 8, 306 14 Plze\v{n}, Czech Republic.\\
		{\it Email}:{ sarkara@ntis.zcu.cz}
		
\end{document}